\begin{document}

\newcounter{rownum}
\setcounter{rownum}{0}
\newcommand{\ab}{\addtocounter{rownum}{1}\arabic{rownum}}
\newcommand{\im}{\mathrm{im}}
\newcommand{\x}{$\times$}
\newcommand{\bb}{\mathbf}
\newcommand{\Lie}{\mathrm{Lie}}
\newcommand{\Char}{\mathrm{char}}
\newcommand{\hra}{\hookrightarrow}
\newtheorem{lemma}{Lemma}[section]
\newtheorem{theorem}[lemma]{Theorem}
\newtheorem*{TG1}{Theorem G1}
\newtheorem*{CG2}{Corollary G2}
\newtheorem*{CG3}{Corollary G3}
\newtheorem*{T1}{Theorem 1}
\newtheorem*{T2}{Theorem 2}
\newtheorem*{C2}{Corollary 2}
\newtheorem*{C3}{Corollary 3}
\newtheorem*{T4}{Theorem 4}
\newtheorem*{C5}{Corollary 5}
\newtheorem*{C6}{Corollary 6}
\newtheorem*{C7}{Corollary 7}
\newtheorem*{C8}{Corollary 8}
\newtheorem*{claim}{Claim}
\newtheorem{corollary}[lemma]{Corollary}
\newtheorem{conjecture}[lemma]{Conjecture}
\newtheorem{prop}[lemma]{Proposition}
\theoremstyle{remark}
\newtheorem{remark}[lemma]{Remark}
\newtheorem{obs}[lemma]{Observation}
\theoremstyle{definition}
\newtheorem{defn}[lemma]{Definition}

  \def\hal{\unskip\nobreak\hfil\penalty50\hskip10pt\hbox{}\nobreak
  \hfill\vrule height 5pt width 6pt depth 1pt\par\vskip 2mm}

\renewcommand{\labelenumi}{(\roman{enumi})}
\newcommand{\Hom}{\mathrm{Hom}}
\newcommand{\Ext}{\mathrm{Ext}}
\newcommand{\soc}{\mathrm{Soc}}
\newcommand{\Aut}{\mathrm{Aut}}
\newcommand{\N}{\mathcal{N}}

\newenvironment{changemargin}[1]{%
  \begin{list}{}{%
    \setlength{\topsep}{0pt}%
    \setlength{\topmargin}{#1}%
    \setlength{\listparindent}{\parindent}%
    \setlength{\itemindent}{\parindent}%
    \setlength{\parsep}{\parskip}%
  }%
  \item[]}{\end{list}}

\parindent=0pt
\addtolength{\parskip}{0.5\baselineskip}

 \title[Non-$G$-cr subgroups]{Non-$G$-completely reducible subgroups of the exceptional algebraic groups}
\author{David I. Stewart}
\address{New College, Oxford\\ Oxford, UK} \email{david.stewart@new.ox.ac.uk {\text{\rm(Stewart)}}}
\pagestyle{plain}
\begin{abstract}Let $G$ be an exceptional algebraic group defined over an algebraically closed field $k$ of characteristic $p>0$ and let $H$ be a subgroup of $G$. Then following Serre we say $H$ is $G$-completely reducible or $G$-cr if, whenever $H$ is contained in a parabolic subgroup $P$ of $G$, then $H$ is in a Levi subgroup of that parabolic. Building on work of Liebeck and Seitz, we find all triples $(X,G,p)$ such that there exists a closed, connected, simple non-$G$-cr subgroup $H\leq G$ with root system $X$.\end{abstract}
\maketitle
\section{Introduction}

Let $G$ be an algebraic group defined over an algebraically closed field $k$ of characteristic $p>0$ and let $H$ be a subgroup of $G$. Then following Serre \cite{Ser98} we say $H$ is $G$-completely reducible or $G$-cr if, whenever $H$ is contained in a parabolic subgroup $P$ of $G$, then $H$ is in a Levi subgroup of that parabolic. This is a natural generalisation of the notion of a group acting completely reducibly on a module $V$: if we set $G=GL(V)$ then saying $H$ is $G$-completely reducible is precisely the same as saying that $H$ acts semisimply on $V$.

This notion is important in unifying some other pre-existing notions and results. For instance, in \cite{BMR05}, it was shown that a subgroup $H$ is $G$-cr if and only if it satisfied Richardson's notion of being strongly reductive in $G$. It also allows one to state some previous results due to Liebeck--Seitz and Liebeck--Saxl--Testerman on the subgroup structure of the exceptional algebraic groups in a particularly satisfying form:

Assume $G$ is simple of one of the five exceptional types and let $X$ be a simple root system. The result \cite[Theorem 1]{LS96} asserts a number $N(X,G)$ such that if $H$ is closed, connected and simple, with root system $X$, then $H$ is $G$-cr whenever the characteristic $p$ of $k$ is bigger than $N(X,G)$. In particular if $p$ is bigger than $7$ then they show that all closed, connected, reductive subgroups of $G$ are $G$-cr. There is some overlap in that paper with the contemporaneous work of \cite{LST96}. If $H$ is a simple subgroup of rank greater than half the rank of $G$, then [Theorem 1, {\it ibid.}] finds all conjugacy classes of simple subgroups of $G$, the proofs indicate where these conjugacy classes are $G$-completely reducible. With essentially one class of exceptions, all subgroups, including the non-$G$-cr subgroups, can be located in `nice' so-called subsystem subgroups of $G$. We shall mention these in greater detail later.

More recently, \cite{SteG2} and \cite{SteF4} find all conjugacy classes of simple subgroups of exceptional groups of types $G_2$ and $F_4$. One consequence of this is to show that the numbers $N(X,G)$ found above can be made strict. (One need only change $N(A_1,G_2)$ from $3$ to $2$.) The main purpose of this article is to make {\it all} the $N(X,G)$ strict. That is, for each of the five types of exceptional algebraic group $G$, for each prime $p=\Char\ k$ and for each simple root system $X$, we give in a table of Theorem 1 an example $H=E(X,G,p)$ of a connected, closed, simple non-$G$-cr subgroup $H$\footnote{(thus, $H$ is in some parabolic $P$, but in no Levi subgroup $L$ of $P$)} with root system $X$, precisely when this is possible. In other words we classify the triples $(X,G,p)$ where there exists a connected, closed, simple non-$G$-cr subgroup $H$ with root system $X$. Moreover, in all but one case (where $(X,G,p)=(G_2,E_7,7)$), we can locate $E(X,G,p)$ in a subsystem subgroup. 

Our main theorem can thus be viewed as the best possible improvement of the result \cite[Theorem 1]{LS96}, in the spirit of that result. Before we state our main theorem in full, we need a definition: A \emph{subsystem subgroup} of $G$ is a simple, closed, connected subgroup $Y$ which is normalised by a maximal torus $T$ of $G$. Let $\Phi$ be the root system of $G$ corresponding to a choice of Borel subgroup $B\geq T$ and for $\alpha\in\Phi$, let $U_\alpha$ denote the $T$-root subgroup corresponding to $\alpha$. Then $Y=\langle U_\alpha|\alpha\in \Phi_0\rangle$ where either $\Phi_0$ is a closed subsystem of $\Phi$ or $(\Phi,p)$ is $(B_n,2)$, $(C_n,2)$, $(F_4,2)$ or $(G_2,3)$ and $\Phi_0$ lies in the dual of a closed subsystem. The subsystem subgroups of $G$ are easily determined by the Borel--de Siebenthal algorithm. Most of our examples $H=E(X,G,p)$ are described in terms of an embedding of $H$ into a subsystem subgroup $M$. Here we describe $M$ just by giving its root system.

\begin{T1} Let $G$ be an exceptional algebraic group defined over an algebraically closed field $k$ of characteristic $p>0$. Suppose there exists a non-$G$-cr closed, connected, simple subgroup $H$ of $G$ with root system $X$. Then $(X,G,p)$ has an entry in Table \ref{t1}. 

Conversely, for each $(X,G,p)$ given in Table \ref{t1}, the last column guarantees an example of a closed, connected, simple, non-$G$-cr subgroup $E(X,G,p)$ with root system $X$.\end{T1}
\begin{table}\label{t1}\begin{tabular}{|c|c|c|c|}\hline
$G$ & $X$ & $p$ & Example $E=E(X,G,p)$\\\hline\hline
$G_2$ & $A_1$ & $2$ & $E\hookrightarrow A_1\tilde A_1$; $x\mapsto (x,x)$\\\hline\hline
$F_4$ & $A_1$ & $2$ & $E\hookrightarrow A_1^2$; $x\mapsto (x,x)$\\
&&$3$ & $E\hookrightarrow A_2^2$; $(V_3,V_3)\downarrow E=(2,2)$\\\hline
&$A_2$ & $3$ & $E\hookrightarrow A_2^2$; $x\mapsto (x,x)$\\\hline
&$B_2$ & $2$ & $E\leq D_3$\\\hline
&$G_2$ & $2$ & $E\leq D_4$\\\hline
&$B_3$ & $2$ & $E\leq D_4$\\\hline\hline
$E_6$ & $A_1$ & $2$ & $E\hookrightarrow A_1^2$; $x\mapsto (x,x)$\\
& & $3$ & $E\hookrightarrow A_2^2$; $(V_3,V_3)\downarrow E=(2,2)$\\
& & $5$ & $E\hookrightarrow D_5$; $V_{10}\downarrow E=T(8)$\\\hline
& $A_2$ & $2$ & $E\hookrightarrow A_5$; $V_6\downarrow E=V(20)=10^{[1]}/01$\\
& & $3$ & $E\hookrightarrow A_2^3$; $x\mapsto (x,x,x)$\\\hline
& $B_2$ & $2$ & $E\hookrightarrow A_4$; $V_5\downarrow E=V(10)=10/00$\\\hline
& $G_2$ & $2$ & $E\hookrightarrow C_4$; $V_8\downarrow E=T(10)$\\\hline
& $B_3$ & $2$ & $E\hookrightarrow C_4$; $V_8\downarrow E=T(100)$\\\hline\hline
$E_7$ & $E\leq E_6$ & $2,3,5$ & each of the subgroups of $E_6$ above\\\hline
& $A_1$ & $7$ & $E\leq A_7$; $V_8\downarrow E=W(7)=1^{[1]}/5$\\\hline
& $G_2$ & $7$ & $E$ in an $E_6$-parabolic of $G$ *\\\hline
& $C_4$ & $2$ & $E\hookrightarrow A_7$; $V_8\downarrow E=L(1000)$\\\hline
& $D_4$ & $2$ & $E\leq C_4$ above\\\hline\hline
$E_8$ & $E\leq E_7$ & $2,3,5,7$ & each of the subgroups of $E_7$ above\\\hline
& $B_2$ & $5$ & $E\leq D_8$; $V_{16}\downarrow E=T(20)=00/20/00$\\\hline
& $A_3$ & $2$ & $E\leq D_8$; $V_{16}\downarrow E=T(101)=000/101/000$\\\hline
& $C_3$ & $3$ & $E\leq D_8$; $V_{16}\downarrow E=000/010/000+000$?\\\hline
& $B_4$ & $2$ & $E\leq A_8$; $V_9\downarrow E=1000/0000$\\\hline
\end{tabular}\caption{Simple non-$G$-cr subgroups of type $X$ in the exceptional groups}\end{table}

In particular we can improve on \cite[Theorem 1]{LS96}. In the table in Corollary 2 we have struck out the primes which were used in the hypotheses in [loc. cit.]. This is done partly to show where we have made improvements but mainly to facilitate reading the proof of the first part of Theorem 1.

\begin{C2}Let $G$ be an exceptional algebraic group over a field $k$ of characteristic $p$. Let $X$ be a simple root system and let $N(X,G)$ be a list of primes defined by the table below. Suppose $H$ is a closed, connected, reductive subgroup of $G$ with root system having simple components $X_1,\dots,X_n$. Then if $p\not\in\bigcup_i N(X_i,G)$, $H$ is $G$-cr.
\begin{center}\begin{tabular}{r|rrrrr}
&$G=E_8$&$E_7$&$E_6$ & $F_4$ & $G_2$\\
\hline
$X=A_1$ & $\leq 7$ & $\leq 7$ & $\leq 5$ & $\leq 3$ & $\not3\ 2$\\
$A_2$ & $\not 5\ 3\ 2$ & $\not 5\ 3\ 2$ & $3\ 2$ & $3 \not 2$\\
$B_2$ & $5 \not 3\ 2$ & $\not 3\ 2$ & $\not 3\ 2$ & $2$\\
$G_2$ & $7\not 5\ 3\ 2$ & $7\not 5\ 3\ 2$ & $\not 3\ 2$ & $ 2$\\
$A_3$ & $2$ & $\not 2$ & $\not 2$ & \\
$B_3$ & $2$ & $2$ & $2$ & $2$\\
$C_3$ & $3\ \not 2$ & $\not 2$ & $\not 2$ & $\not 2$\\
$B_4$ & $2$ & $\not2$ & $\not2$\\
$C_4, D_4$ & $2$ & $2$ & $\not 2$\end{tabular}
\end{center}\end{C2}

Using the above description of $N(X,G)$ one also gets generalisations to each of the other results \cite[Theorems 2--8]{LS96}, by replacing the hypothesis `$p>N(X,G)$' by `$p\not\in N(X,G)$'. 

\section{Notation}
When discussing roots or weights, we use the Bourbaki conventions \cite[VI. Planches I-IX]{Bourb82}. We use a lot of representation theory for algebraic groups whose notation we have taken largely consistent with \cite{Jan03}. For an algebraic group $G$, recall that a $G$-module is a comodule for the Hopf algebra $k[G]$; in particular every $G$-module is a $kG$-module. Let $B$ be a Borel subgroup of a reductive algebraic group $G$, containing a maximal torus $T$ of $G$. Recall that for each dominant weight $\lambda\in X^+(T)$ for $G$, the space $H^0(\lambda):=H^0(G/B,\lambda)=\mathrm{Ind}_B^G(\lambda)$ is a $G$-module with highest weight $\lambda$ and with socle $\mathrm{Soc}_G H^0(\lambda)=L(\lambda)$, the irreducible $G$-module of highest weight $\lambda$. The Weyl module of highest weight $\lambda$ is $V(\lambda)\cong H^0(-w_0\lambda)^*$ where $w_0$ is the longest element in the Weyl group. We identify $X(T)$ with $\mathbb Z^r$ for $r$ the rank of $G$ and for $\lambda\in X(T)^+\cong\mathbb Z_{\geq 0}^r\leq X(T)$, write $\lambda=(a_1,a_2,\dots,a_r)=a_1\omega_1+\dots+a_r\omega_r$ where $\omega_i$ are the fundamental domninant weights; a $\mathbb Z_{\geq 0}$-basis of $X(T)^+$. Put also $L(\lambda)=L(a_1,a_2,\dots,a_r)$.  When $0\leq a_i<p$ for all $i$, we say that $\lambda$ is a restricted weight and we write $\lambda \in X_1(T)$. Recall that any module $V$ has a Frobenius twist $V^{[n]}$ induced by raising entries of matrices in $GL(V)$ to the $p^n$th power. Steinberg's tensor product theorem states that $L(\lambda)=L(\lambda_0)\otimes L(\lambda_1)^{[1]}\otimes\dots\otimes L(\lambda_n)^{[n]}$ where $\lambda_i\in X_1(T)$ and $\lambda=\lambda_0+p\lambda_1+\dots+p^n\lambda_n$ is the $p$-adic expansion of $\lambda\in\mathbb Z_+^r$. We refer to $\lambda_0$ as the restricted part of $\lambda$.

The right derived functors of $\Hom(V,*)$ are denoted by $\Ext_G^i(V,*)$ and when $V=k$, the trivial $G$-module, we have the identity $\Ext_G^i(k,*)=H^i(G,*)$ giving the Hochschild cohomology groups. 

We recall some standard modules; when $G$ is classical, there is a `natural module' which we refer to by $V_\text{nat}$; or $V_m$ where $m$ is the dimension of $V_\text{nat}$. It is always the Weyl module $V(\omega_1)$, which is irreducible unless $p=2$ and $G$ is of type $B_n$; in the latter case it has a $1$-dimensional radical. Certain properties of these modules is described in \cite[8.21]{Jan03}. Of importance to us is the fact that when $G=SL_n$, $\bigwedge^r(L(\omega_1))=L(\omega_r)$ for $r\leq n-1$. We use this fact without further reference.

Recall that $F_4$ has a $26$-dimensional Weyl module which we denote `$V_{26}$'. When $p\neq 3$, $V_{26}$ is the irreducible representation of high weight $0001=\omega_4$. When $p=3$, $V_{26}$ has a one-dimensional radical, with a $25$-dimensional irreducible quotient of high weight $0001$. $E_6$ (resp. $E_7$, $E_8$) has a module of dimension $27$ (resp. $56$, $248$) of high weight $\omega_1$ (resp. $\omega_7$, $\omega_8$) which is irreducible in all characteristics. We refer to this module as $V_{27}$ (resp. $V_{56}$, $\Lie(E_8)$).

We will often want to consider restrictions of simple $G$-modules to reductive subgroups $H$ of $G$. Where we write $V_1|V_2|\dots |V_n$ we list the composition factors $V_i$ of an $H$-module. For a direct sum of $H$-modules, we write $V_1+V_2$. Where a module is uniserial, we will write $V_1/\dots/V_n$ to indicate the socle and radical series: here the head is $V_1$ and the socle $V_n$. On rare occasions we use $V/W$ to indicate a quotient. It will be clear from the context which is being discussed. 

Recall also the notion of a tilting module as one having a filtration by modules $V(\mu)$ for various $\mu$ and also a filtration by modules $H^0(\mu)$ for various $\mu$ (equiv. dual Weyl modules). Let us record in a lemma some key properties of tilting modules which we use:\begin{lemma}\label{tilting}\begin{enumerate}\item For each $\lambda\in X(T)^+$ there is a unique indecomposable tilting module $T(\lambda)$ of high weight $\lambda$; 
\item A direct summand of a tilting module is a tilting module;
\item The tensor product of two tilting modules is a tilting module;
\item $\Ext^1_G(T(\lambda),T(\mu))=0$; in particular $H^1(G,T(\lambda))=0$.
\end{enumerate}\end{lemma}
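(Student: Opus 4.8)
The plan is to build the whole lemma on one elementary base computation together with one deep external input, and to isolate both. First I would recall the vanishing $\Ext^i_G(V(\lambda),H^0(\mu))=0$ for all $i>0$ and all $\lambda,\mu\in X(T)^+$, with $\Hom_G(V(\lambda),H^0(\mu))$ equal to $k$ when $\lambda=\mu$ and $0$ otherwise; this is immediate from Kempf's vanishing theorem together with the tensor identity. A d\'evissage on the length of a filtration then upgrades it to the cohomological criterion: a finite-dimensional $G$-module $M$ has a good filtration if and only if $\Ext^1_G(V(\lambda),M)=0$ for every $\lambda\in X(T)^+$. (Forward: induct along the good filtration using the base vanishing. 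Converse: take $\mu$ maximal among the weights of $M$, split off a copy of $H^0(\mu)$ as a submodule using the $\Ext^1$-vanishing, and induct on $\dim M$.) Applying the contravariant duality $\tau$ on finite-dimensional $G$-modules --- exact, fixing every $L(\lambda)$ and every $T(\lambda)$, and interchanging $V(\lambda)\leftrightarrow H^0(\lambda)$ --- gives the dual criterion: $M$ has a Weyl filtration iff $\Ext^1_G(M,H^0(\lambda))=0$ for all $\lambda$.

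Parts (4) and (2) are then formal. For (4), filter $T(\lambda)$ by Weyl modules and $T(\mu)$ by modules $H^0(\nu)$; feeding both filtrations through the long exact sequences of $\Ext^\bullet_G(-,-)$ and using $\Ext^1_G(V(\rho),H^0(\nu))=0$ at each stage yields $\Ext^1_G(T(\lambda),T(\mu))=0$. Taking $\mu=0$ and noting $T(0)=k=H^0(0)$ gives $H^1(G,T(\lambda))=0$. For (2): if a tilting module $M$ decomposes as $M_1\oplus M_2$, then for each $\lambda$ the space $\Ext^1_G(V(\lambda),M_i)$ is a direct summand of $\Ext^1_G(V(\lambda),M)=0$, and $\Ext^1_G(M_i,H^0(\lambda))$ is a direct summand of $\Ext^1_G(M,H^0(\lambda))=0$, so by the two criteria each $M_i$ has both a good and a Weyl filtration.

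For (1), existence is by a universal-extension construction: set $T_0=V(\lambda)$, and as long as $\Ext^1_G(V(\mu),T_i)\neq0$ for some weight (necessarily $\mu<\lambda$), form a universal extension of $T_i$ by suitable copies of such Weyl modules $V(\mu)$; the resulting $T_{i+1}$ again has a Weyl filtration, but the set of weights $\mu$ with $\Ext^1_G(V(\mu),T_{i+1})\neq0$ strictly shrinks, so the process terminates at a module $T$ with a good and a Weyl filtration, highest weight $\lambda$, and $[\,T:L(\lambda)\,]=1$. Its unique indecomposable summand having $L(\lambda)$ as a composition factor is tilting by (2), has highest weight $\lambda$, and is the required $T(\lambda)$. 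For uniqueness, let $T,T'$ be indecomposable tilting of highest weight $\lambda$. Since $V(\lambda)$ is the bottom step of some Weyl filtration of $T$ and $H^0(\lambda)$ is a top quotient of some good filtration of $T'$, the $\Ext^1$-vanishing between Weyl- and good-filtered modules forces the natural maps $\Hom_G(T,T')\to\Hom_G(V(\lambda),H^0(\lambda))=k$ and $\Hom_G(T',T)\to k$ to be surjective; choosing $f\colon T\to T'$ and $g\colon T'\to T$ mapping to generators, the composite $gf$ induces the identity on the multiplicity-one constituent $L(\lambda)$, hence is not nilpotent, hence --- $\mathrm{End}_G(T)$ being local by Fitting's lemma --- an automorphism. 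So $f$ is a split injection, and by Krull--Schmidt $f$ is an isomorphism.

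Part (3) is where the one genuinely hard fact is needed. It suffices to prove that the tensor product of two modules with a good filtration again has a good filtration: applying $\tau$ then also gives the Weyl filtration on $T(\lambda)\otimes T(\mu)$, so that module is tilting and splits into a sum of various $T(\nu)$ by (2) and Krull--Schmidt. By the cohomological criterion the tensor-product statement amounts to $\Ext^1_G(V(\nu),H^0(\lambda)\otimes H^0(\mu))=0$ for all $\nu$, and this is precisely where elementary homological algebra gives out; here I would simply invoke the theorem of Donkin --- completed in full generality by Mathieu, with Wang's earlier proof in type $A_n$ --- that a tensor product of good-filtration modules has a good filtration. Thus the real obstacle in the lemma is concentrated entirely in (3), while parts (1), (2) and (4) are d\'evissage around the single input $\Ext^{>0}_G(V(\lambda),H^0(\mu))=0$.
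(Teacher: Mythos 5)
Your proposal is correct, but it is doing something quite different from what the paper does: the paper's ``proof'' of this lemma is essentially a list of citations --- \cite{Don93} for (i) and (iii), Jantzen II.4.13 for (iv) --- plus a one-line remark for (ii), whereas you unfold those citations into the standard self-contained development (the base vanishing $\Ext^{>0}_G(V(\lambda),H^0(\mu))=0$, Donkin's cohomological criterion for good filtrations, the universal-extension construction of $T(\lambda)$, Fitting's lemma for uniqueness). In the one place where the paper actually argues --- part (ii), ``project a filtration to a direct summand and use indecomposability of $V(\lambda)$ and $H^0(\lambda)$'' --- your route via the $\Ext^1$-criterion is the standard rigorous one and is arguably preferable, since the image of a good filtration under projection to a summand is not literally a good filtration of that summand. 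Both treatments concentrate the genuine depth in the same place: part (iii) rests on the Donkin--Mathieu(--Wang) tensor product theorem, which you correctly identify as the only non-formal input and which the paper hides inside the citation of \cite[1.2]{Don93}. Two small points to tighten: in the universal-extension construction, ``the set of bad weights strictly shrinks'' should be justified by processing the finitely many weights $\leq\lambda$ from the top down, using that $\Ext^1_G(V(\nu),V(\mu))\neq 0$ forces $\nu<\mu$, so that adding copies of $V(\mu)$ cannot reintroduce obstructions at weights already handled; and the assertion that the duality $\tau$ fixes every $T(\lambda)$ should be deferred until after uniqueness in (1) is established, since it is a consequence of it (you only use it in (iii), so the logic is fine, but the ordering of the statements invites a circularity that isn't actually there).
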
\begin{proof} For (i), see \cite[1.1(i)]{Don93}; (ii) is clear by projecting a filtration to a direct summand and using the fact that Weyl modules and induced modules are indecomposable; (iii) is \cite[1.2]{Don93}; (iv) follows from \cite[II.4.13 (2)]{Jan03}.\end{proof}
As we are considering very low weight representations in general, it is possible to spot that a module is a $T(\lambda)$; for instance when $p=2$, the natural Weyl module for $B_n$ has a $1$-dimensional radical, so its structure is  $W(\lambda_1)=L(\lambda_1)/k$. It is then the case that giving the Loewy series for a module $k/L(\lambda_1)/k$ uniquely characterises it as a tilting module $T(\lambda_1)$.

Recall that a parabolic subgroup $P$ of $G$ has a Levi decomposition, \[1\to Q\to P\to L\to 1\] where $Q$ is the unipotent radical of the $P$. Recall also $L=L'Z(L)$ with $L'$ being semisimple.

\section{Outline}Theorem 1 has two facets. The first proves that if $p\not\in N(X,G)$ for $N(X,G)$ as defined in Corollary 2, then $X$ is $G$-cr. The second proves the existence of the examples given in Table \ref{t1} and proves that they are non-$G$-cr. 

The proof of the first part runs along the same lines as that of \cite[Theorem 1]{LS96}:  Assume $H$ is a closed, connected, simple non-$G$-cr subgroup of $G$. Then $H$ is a subgroup of $P=LQ$; let $\bar H$ be its image in $L'$. Almost all the time, $H\cap Q=\{1\}$ as group-schemes and so we have $HQ=\bar HQ$ and $H$ is a complement to $Q$ in $\bar HQ$. Then the possibilities for $H$ are parameterised by $H^1(\bar H,Q)$; in fact, in any case,  the possibilities for $H$ are parameterised by $H^1(\bar H,Q^{[1]})$. This is the content of \cite[Lemma 3.6.1]{SteRes}.

From \cite{ABS90}, $Q$ has a filtration $Q=Q_1\geq Q_2\geq Q_3\dots$ with successive quotients being known (usually semisimple) $L$-modules. So if we have $H^1(\bar H,(Q_i/Q_{i+1})^{[1]})=0$ for each $i$, then (by \ref{comp}(ii)) $H^1(\bar H,Q^{[1]})=0$ and $H$ is conjugate to $\bar H$.

Now, for an exceptional algebraic group $G$ over $k$ of characteristic $p$ and a simple root system $X$ we consider possible embeddings $\bar H\leq L'$ where $\bar H$ is an $L'$-irreducible subgroup (which can be determined using \ref{girclassical} and/or by working down through \ref{excepmax}). The composition factors $V$ of the restrictions of the $L$-modules $Q_i/Q_{i+1}$ are investigated, and then conditions for the vanishing of $H^1(\bar H,V)$ found, for all relevant $V$. (Usually the dimensions of the composition factors are too small to admit non-vanishing of $H^1(\bar H,V)$.) 

With essentially one exception, one can reduce to the case where $V$ is of the form $L(\lambda)\otimes L(\mu)^{[1]}$ with $L(\lambda)$ non-trivial and restricted. There are any number of computer programs one can use to calculate the values of $H^1(X,V)$ where $\mu$ is $0$.\footnote{We use the data on Frank L\"ubeck's website which accompanies \cite{Lub01}.} Since the possible dimension of $V$ is limited to a subset of roots of $G$, this process is finite. 

For the proof of the second part of Theorem 1, we must show that for each of the remaining cases (where some composition factor $V$ of $Q^{[1]}$ has $H^1(\bar H,V)\neq 0$), we exhibit a non-$G$-cr subgroup $H$ with the required root system over the required characteristic. In almost all cases we can give an example in a classical subgroup of $G$. Here is it easy to see when it is in a parabolic subgroup using \ref{girclassical}. In two cases this is not possible, yet we can assert the existence of such a group using a cohomological argument.

\section{Preliminaries}
One needs to be careful about the notion of complements in semidirect products of algebraic groups. These are treated systematically in \cite{McN10}. We recall some of the main facts.

\begin{defn}[cf. {\cite[4.3.1]{McN10}}]\label{compdef} Let $G=H\ltimes Q$ be a semidirect product of algebraic groups as in \cite[I.2.6]{Jan03}.

A closed subgroup $H'$ of $G$ is a \emph{complement} to $Q$ if it satisfies the following equivalent conditions:
\begin{enumerate}\item Multiplication is an isomorphism $H'\ltimes Q\to G$.
\item $\pi_{H'}:H'\to H$ is an isomorphism of algebraic groups
\item As group-schemes, $H'Q=G$ and $H'\cap Q=\{1\}$.
\item For the groups of $k$-points, one has $H'(k)Q(k)=G(k)$, $H'(k)\cap Q(k)=\{1\}$ and $\Lie(H')\cap \Lie(Q)=0$.\end{enumerate}\end{defn}
\begin{remark} See \cite[\S3.2]{SteRes} for a discussion. Note that \cite{LS96} uses item (iv) above as its definition of a complement, without the last condition on Lie algebras.\end{remark}

\begin{defn}A rational map $\gamma:H\to Q$ is a \emph{$1$-cocycle} if $\gamma(nm)=\gamma(n)^m\gamma(m)$ for each $n,m\in N(k)$. We write $Z^1(H,Q)$ for the set of $1$-cocycles.

We say $\gamma\sim\delta$ if there is an element $q\in Q(k)$ with $q^{-h}\gamma(h)q=\delta(h)$ for each $h\in H(k)$. We write $H^1(H,Q)$ for the set of equivalence classes of $1$-cocycles $Z^1(H,Q)/\sim$.
\end{defn}

We recall some results from \cite{SteRes}.

\begin{lemma}\label{comp}\begin{enumerate}\item The set of $1$-cocycles $Z(H,Q)$ is in bijection with the set of complements to $Q$ in $HQ$. Two cocycles are equivalent if the corresponding complements are conjugate by an element of $H(k)$.
\item Suppose $H$ is a closed, connected, reductive subgroup of a parabolic subgroup $P=LQ$ of $G$ and denote by $\bar H$ the subgroup of $L$ given by the image of $H$ under the quotient map $\pi:P\to L$.

Then as abstract groups $H(k)$ is a complement to $Q(k)$ in $\bar H(k)Q(k)$; and either (1) $H$ is a complement to $Q$ in $\bar HQ$; or (2) \begin{enumerate}\item $p=2$;
\item There exists a component $SO_{2n+1}$ of the semisimple group $H/Z(H)^\circ$;
\item the image of this component in $\bar H/Z(\bar H)^\circ$ is isomorphic to $Sp_{2n}$; and
\item the natural module for $Sp_{2n}$ appears in a filtration of $Q$ by $\bar H$-modules.\end{enumerate} In case (2), $H$ corresponds to a cocycle $\gamma\in Z^1(\bar H,Q^{[1]})$ such that $[\gamma]$ has no preimage in $H^1(\bar H,Q)$ under the inclusion $H^1(\bar H,Q)\to H^1(\bar H,Q^{[1]})$.

Thus there is a bijection between the set of conjugacy classes of closed, connected, reductive  subgroups $H$ of $\bar HQ$ and the set $H^1(\bar H,Q^{[1]})$.
\item In a filtration of a unipotent algebraic $H$-group $Q$ by $H$-modules (such as that given by \ref{abs}) if each composition factor $V$ satisfies $H^1(H,V)=0$ then $H^1(H,Q)=0$.
\end{enumerate}\end{lemma}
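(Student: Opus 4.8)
The plan is to treat the three parts in order; parts (i) and (iii) are formal, and the real work is in (ii). \emph{Part (i)} is the standard dictionary for complements in a semidirect product. Given a complement $H'$, I would invert the isomorphism $\pi_{H'}\colon H'\to H$ of \ref{compdef}(ii) to get a section $\sigma\colon H\to H'\le H\ltimes Q$ and write $\sigma(h)=h\,\gamma(h)$ with $\gamma(h)\in Q$, uniquely since $HQ=H\ltimes Q$; the identity $\gamma(nm)=\gamma(n)^m\gamma(m)$ is exactly the statement that $\sigma$ is a homomorphism, and $\gamma$ is a morphism of varieties because $H'$ is closed. Conversely a cocycle $\gamma$ produces the closed subgroup $\{h\,\gamma(h):h\in H\}$, which is a complement, and the two assignments are mutually inverse. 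A short calculation shows that conjugating a complement by $q\in Q(k)$ transforms its cocycle $\gamma$ into the equivalent cocycle $h\mapsto q^{-h}\gamma(h)q$, so $\sim$-classes match conjugacy classes; I would cite \cite{McN10} for the verifications.

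\emph{Part (ii)}: The reduced subgroup scheme $(H\cap Q)_{\mathrm{red}}$ is a closed normal unipotent subgroup of the connected reductive group $H$, hence trivial, so $(H\cap Q)(k)=\{1\}$; as $\pi$ maps $H(k)$ onto $\bar H(k)$, this already shows $H(k)$ is an abstract complement to $Q(k)$ in $\bar H(k)Q(k)$. Put $K:=H\cap Q=\ker(\pi|_H)$; then $K$ is infinitesimal, $\Lie K=\Lie H\cap\Lie Q$, and by \ref{compdef}(iv) $H$ is a scheme-theoretic complement to $Q$ in $\bar HQ$ precisely when $K=1$. Suppose $K\neq1$. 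Since the scheme-theoretic centre $Z(H)$ lies in a maximal torus it is of multiplicative type, while $Q$ is unipotent; hence $K$ is a \emph{non-central} normal infinitesimal subgroup scheme of $H$, so $\pi|_H\colon H\to\bar H$ is a non-central purely inseparable isogeny of connected reductive groups. By the classification of isogenies (Chevalley, Steinberg) such a map is a composite of Frobenius maps with the \emph{special} isogenies, which exist only when the root system has two root lengths with squared ratio $p$, i.e.\ for $B_n,C_n,F_4$ at $p=2$ and $G_2$ at $p=3$; moreover $K$ cannot contain a Frobenius kernel, for then $\Lie H\le\Lie Q$, forcing $\bar H$ unipotent and hence trivial. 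So $K$ is the kernel of a single special isogeny, and $\Lie K$ is the span of the short-root spaces of $H$ together with part of the Cartan. Of the four families this ideal is the natural module of $\bar H\cong Sp_{2n}$ only in type $B_n$; checking which case is compatible with $\Lie K$ lying inside $\Lie Q$ as a subquotient of the $\bar H$-filtration — equivalently, with the natural $Sp_{2n}$-module occurring in it — leaves precisely alternative (2). Finally, in case (2) the isogeny $\pi|_H$ is bijective on $k$-points and its composite with its partner is the Frobenius, and that the conjugacy classes of such $H$ are then parameterised by $H^1(\bar H,Q^{[1]})$ — those admitting a genuine complement being the image of $H^1(\bar H,Q)$ — is \cite[Lemma 3.6.1]{SteRes}. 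Together with (i) this gives the claimed bijection.

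\emph{Part (iii)} is a dévissage. First: if $V$ is an $H$-module each of whose composition factors $W$ has $H^1(H,W)=0$, then $H^1(H,V)=0$, by induction on composition length using the cohomology exact sequences of $0\to V'\to V\to V''\to 0$. Then I would induct on the length $m$ of the filtration $Q=Q_1\ge\cdots\ge Q_m=\{1\}$: there is nothing to prove for $m\le1$, and for larger $m$ the group $Q_2$ carries a filtration of length $m-1$ with factors among those of $Q$, so $H^1(H,Q_2)=0$ by induction, while $Q/Q_2$ is an $H$-module with composition factors among those of $Q$, so $H^1(H,Q/Q_2)=0$ by the module step. Given $\gamma\in Z^1(H,Q)$, its image in $Z^1(H,Q/Q_2)$ is a coboundary, so after a suitable $\sim$-move (conjugating by a lift in $Q(k)$ of a trivialising element of $(Q/Q_2)(k)$) we may assume $\gamma$ takes values in $Q_2$, whence $\gamma\in Z^1(H,Q_2)$ is a coboundary in $Q_2\le Q$. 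Hence $H^1(H,Q)=\{1\}$.

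\textbf{The main obstacle} is (ii): enumerating the admissible non-central infinitesimal kernels $K$, verifying that \emph{only} the $B_n\to C_n$ special isogeny at $p=2$ survives the constraint that $\Lie K\le\Lie Q$ be a subquotient of the filtration, and then the $Q^{[1]}$-twisted parameterisation of complements. I would import this from \cite{McN10} and \cite{SteRes}; parts (i) and (iii) are routine once that framework is available.
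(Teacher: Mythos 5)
Your proposal is correct and follows essentially the same route as the paper, whose proof of this lemma consists entirely of citations to \cite{SteRes} (Lemmas 3.2.2 and 3.6.1, the ``sectioned filtration'' machinery and the non-abelian cohomology sequence) and \cite{McN10} --- you simply reconstruct the content of those references and, for the delicate isogeny analysis in (ii), defer to the same sources the paper does. One small repair in (ii): the reason $K$ cannot contain a Frobenius kernel is not that $\bar H$ would become unipotent (the quotient $H/H_1$ is again reductive), but that $H_1$ contains $T_1\cong\mu_p^{\,r}$, which is of multiplicative type and so cannot embed in the unipotent group $Q$; the same observation (toral elements in the short-root ideal) is what eliminates the $C_n$, $F_4$ and $G_2$ special isogenies and leaves only alternative (2).
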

\begin{proof}(i) is \cite[Lemma 3.2.2]{SteRes}; (ii) is \cite[Lemma 3.6.1]{SteRes}.
For (iii), such a filtration is `sectioned' in the sense of \cite[Definition 3.2.7]{SteRes} using \cite[Lemma 3.2.8]{SteRes}. Now one uses  the exact sequence of non-abelian cohomology in \cite[2.1(i)]{SteRes} inductively. (See the discussion in \cite[\S3.2]{SteRes} on the validity of this sequence for rational cohomology.)\end{proof}

In almost all cases the cohomology group $H^1(G,V)$ for a semisimple algebraic group $G$ satisfies $H^1(G,V)\cong H^1(G,V^{[1]})$. This fact allows us to reduce our considerations to simple modules with non-trivial restricted parts.
\begin{lemma}\label{cnrep}Let $G$ be a simple algebraic group and $V$ a simple $G$-module. Then $H^1(G,V)\cong H^1(G,V^{[1]})$ unless $G$ is $Sp_{2n}$ and $V$ is its $2n$-dimensional natural module.

Moreover $H^1(G,V^{[1]})$ is isomorphic to its generic cohomology $H^1_\text{gen}(G,V)$.\end{lemma}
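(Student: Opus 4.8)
The plan is to exploit the Lyndon--Hochschild--Serre spectral sequence for the first Frobenius kernel $G_1\triangleleft G$ with coefficients in $V^{[1]}$. Since $G_1$ acts trivially on any Frobenius twist, $H^j(G_1,V^{[1]})\cong H^j(G_1,k)\otimes V^{[1]}$, and the $G$-action on $H^j(G_1,k)$ factors through $G/G_1\cong G$, so $H^j(G_1,k)=M_j^{[1]}$ for a finite-dimensional $G$-module $M_j$; identifying $G/G_1$ with $G$ via Frobenius, the spectral sequence reads
\[E_2^{i,j}=H^i(G,M_j\otimes V)\ \Longrightarrow\ H^{i+j}(G,V^{[1]}).\]
With $M_0=k$ and $(M_1\otimes V)^G=\Hom_G(M_1^*,V)$, the five-term exact sequence becomes
\[0\to H^1(G,V)\to H^1(G,V^{[1]})\to \Hom_G(M_1^*,V)\xrightarrow{\ d_2\ } H^2(G,V),\]
the first map being inflation along Frobenius, which is always injective. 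Hence $H^1(G,V)\cong H^1(G,V^{[1]})$ whenever $\Hom_G(M_1^*,V)=0$, and in any case whenever $d_2$ is injective.

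Next I would feed in the known description of $H^1(G_1,k)$ (see \cite[\S II.12]{Jan03}, or Andersen--Jantzen): for $G$ simple this vanishes unless $(G,p)\in\{(B_n,2),(C_n,2),(F_4,2),(G_2,3)\}$, and in those four cases $M_1$ is explicitly known. When $H^1(G_1,k)=0$ we are done for every simple $V$, so it remains to examine those pairs. For $(C_n,2)$ one has $M_1=L(\omega_1)$, the self-dual $2n$-dimensional natural module of $Sp_{2n}$, which (being minuscule) equals $H^0(\omega_1)$; thus $\Hom_G(M_1^*,V)=0$ unless $V=L(\omega_1)$, while for $V=L(\omega_1)$ Kempf vanishing (\cite[\S II.4]{Jan03}) gives $H^1(Sp_{2n},L(\omega_1))=H^1(Sp_{2n},H^0(\omega_1))=0$ and $H^2(Sp_{2n},H^0(\omega_1))=0$, so $d_2=0$ and $H^1(Sp_{2n},L(\omega_1)^{[1]})\cong\Hom_G(M_1^*,L(\omega_1))=k\neq 0$. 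This is precisely the stated exception. For $(B_n,2)$, $(F_4,2)$ and $(G_2,3)$ one uses the explicit form of $M_1$ in the same way: either $\Hom_G(M_1^*,V)=0$ for every nontrivial simple $V$, or the relevant $d_2$ is injective (again by Kempf vanishing for the $H^2$-term), so that no further genuine exception arises.

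For the ``moreover'' clause, apply the same spectral sequence to $V^{[r+1]}=(V^{[r]})^{[1]}$, obtaining $0\to H^1(G,V^{[r]})\to H^1(G,V^{[r+1]})\to \Hom_G(M_1^*,V^{[r]})\to\cdots$. For $V$ nontrivial every weight of $V^{[r]}$ lies in $p^rX(T)$, whereas the weights of $M_1^*$ are bounded by the highest root of $G$ independently of $r$; hence $\Hom_G(M_1^*,V^{[r]})=0$ for all $r$ large (and, after a short finite check of tiny weights, already for $r\geq 1$). Thus the inflation maps $H^1(G,V^{[r]})\to H^1(G,V^{[r+1]})$ are isomorphisms for $r\geq 1$, and their common value is by definition the generic cohomology; so $H^1(G,V^{[1]})\cong H^1_\text{gen}(G,V)$. (Alternatively this is just the $H^1$-case of the stabilization theorem of Cline--Parshall--Scott--van der Kallen.)

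The main obstacle is the case analysis for $(B_n,2)$, $(C_n,2)$, $(F_4,2)$, $(G_2,3)$: one must pin down the $G$-module structure of $M_1$ (equivalently, of $H^1(G_1,k)$) precisely enough to evaluate $\Hom_G(M_1^*,V)$ and, where that group is nonzero, to decide injectivity of $d_2$. The pair $(C_n,2)$ is where everything lines up to force a genuine failure; checking that the remaining three pairs produce none is the delicate book-keeping step, relying on the known structure of the small modules involved together with Kempf vanishing.
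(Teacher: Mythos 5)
The paper offers no argument of its own for Lemma \ref{cnrep}: the ``proof'' is a pair of citations, to \cite[II.12.2, Remark]{Jan03} and \cite[7.1]{CPSV77}. Your Lyndon--Hochschild--Serre argument for $G_1\triangleleft G$ is precisely the mechanism behind those citations, so in substance you are reconstructing the quoted result rather than finding a new route. The five-term sequence, the identification of the $E_2^{0,1}$-term as $\Hom_G(M_1^*,V)$ with $H^1(G_1,k)=M_1^{[1]}$, the treatment of the $Sp_{2n}$ exception via Kempf vanishing, and the stabilisation argument for the ``moreover'' clause are all correct and are essentially what the sources do.

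There is, however, one concrete error in your input data, and it matters for the logic of your case analysis. The vanishing of $H^1(G_1,k)$ is not governed by the four special-isogeny pairs: by \cite[II.12.2]{Jan03}, $H^1(G_1,k)=0$ \emph{unless} $p=2$ and the root system has a component of type $C_n$ ($n\geq 1$, with $C_1=A_1$ and $C_2=B_2$), in which case it is the Frobenius twist of the natural module of that component. For $(B_n,2)$ with $n\geq 3$, for $(F_4,2)$ and for $(G_2,3)$ one has $[\mathfrak g,\mathfrak g]=\mathfrak g$ --- the long root spaces are reached by brackets of long root vectors, unlike in type $C_n$ where the long roots $2e_i$ are pairwise non-adjacent, vanish on the torus mod $2$, and are hit from pairs of short roots only with structure constant $\pm 2$ --- so $H^1(\mathfrak g,k)=(\mathfrak g/[\mathfrak g,\mathfrak g])^*=0$ and a fortiori $H^1(G_1,k)=0$. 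There is therefore nothing to check in those three cases. This matters because your proposed fallback for them, namely that ``the relevant $d_2$ is injective (again by Kempf vanishing for the $H^2$-term),'' is incoherent as written: Kempf vanishing makes the target $H^2$-term \emph{zero}, which forces $d_2=0$ and would thus \emph{create} a new exception whenever $\Hom_G(M_1^*,V)\neq 0$, exactly as it does for $Sp_{2n}$. Had your premise about $H^1(G_1,k)$ been correct, the argument as written would not have closed those cases; it is only because the premise fails (in the favourable direction) that the lemma holds with the single stated exception. With that correction your proof is complete and agrees with the cited sources.
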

\begin{proof}See \cite[II.12.2, Remark]{Jan03} and \cite[7.1]{CPSV77}. \end{proof}

There are many papers finding the values $\Ext^n_H(L,M)$ with $H$ of low rank and $L,M$ simple. Taking $L=k$, the trivial module, one gets the following result, where we have included more data than necessary for our purposes for completion's sake.

\begin{lemma}\label{h1fora1a2b2a3} Let $V$ be a simple module for a simple algebraic group $H$ where $H$ is one of $SL_2$, $SL_3$, $Sp_4$ over an algebraically closed field of any characteristic $p$; $G_2$ for $p=2,3$ or $p\geq 13$; or $SL_4$, $Sp_6$ or $Sp_8$ when $p=2$. Then $H^1(H,V)$ is at most one-dimensional, and is non-zero if and only if $V$ is a Frobenius twist of one of the modules in the following table. 

In the table we also give some useful dimension data, often in only specific characteristic. 
{\footnotesize 
\begin{table}[ht]
\parbox[t]{.45\linewidth}{
\centering
\begin{tabular}[t]{|c|c|l|c|}\hline $H$ & $p$ & $L$ & dim $L$\\\hline\hline
$SL_2$ & any & $L(p-2)\otimes L(1)^{[1]}$ & $2p-2$\\\hline
$SL_3$ & $p\geq 3$ & $L(p-2,p-2)$ & $(p-1)^3-1$\\
&& $L(1,p-2)\otimes L(1,0)^{[1]}$ & $54$ for $p=5$\\
&& $L(p-2,1)\otimes L(0,1)^{[1]}$ & $54$ for $p=5$\\
& $p=2$ & $L(1,0)\otimes L(1,0)^{[1]}$ & $9$\\
&& $L(0,1)\otimes L(0,1)^{[1]}$ & $9$\\\hline
$Sp_4$ & $p\geq 5$ & $L(0,p-3)$ & \\
&$p\geq 3$ & $L(2,p-2)\otimes L(0,1)^{[1]}$ & $125$ for $p=3$\\
&&$L(p-2,1)\otimes L(1,0)^{[1]}$ & $\geq 64$\\
& $p=2$ & $L(1,0)^{[1]}$ & $4$\\
& & $L(0,1)$ & $4$\\\hline
$G_2$ & $p\geq 13$ & $L(p-5,0)$&\\
& & $L(p-2,1)\otimes L(1,0)^{[1]}$&\\
& & $L(4,p-4)\otimes L(1,0)^{[1]}$&\\
& & $L(3,p-2)$&\\
& & $L(3,p-2)\otimes L(0,1)^{[1]}$&\\
& $p=3$ & $L(1,1)$ & $49$\\
& & $L(0,1)\otimes L(1,0)^{[1]}$ & $49$\\
& $p=2$ & $L(1,0)$ & $6$\\
&& $L(0,1)\otimes L(1,0)^{[1]}$ & $84$\\\hline\end{tabular}
}
\hfill
\parbox[t]{.45\linewidth}{
\centering
\begin{tabular}[t]{|c|c|l|c|}\hline $H$ & $p$ & $L$ & dim $L$\\\hline\hline
$SL_4$ & $p=2$ & $L(1,0,1)$ & $14$\\
 && $L(0,1,0)\otimes L(1,0,0)^{[1]}$ & $24$\\
  && $L(0,1,0)\otimes L(0,0,1)^{[1]}$ & $24$\\
 && $L(1,0,1)\otimes L(0,1,0)^{[1]}$ & $84$\\\hline
 $Sp_6$ & $p=2$ & $L(1,0,0)^{[1]}$ & $6$\\
   &  & $L(1,0,1)$ & $48$\\
   &  & $L(0,1,0)\otimes L(1,0,0)^{[1]}$ & $84$\\\hline
$Sp_{8}$ & $p=2$ & $L(1,0,0,0)^{[1]}$ & $8$\\
&& $L(0,1,0,0)$ & $26$\\
&& $L(1,0,1,0)$ & $246$\\
&& $L(1,0,1,0)\otimes L(0,1,0,0)$ & $6396$\\
&& $L(1,0,1,0)\otimes L(0,1,0,0)$ & $6396$\\
&& $L(0,1,0,1)$ & $416$\\\hline
\end{tabular}
}
\end{table}

}\end{lemma}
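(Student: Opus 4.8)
The plan is to reduce the computation of $H^1(H,V)$ to known results in the literature on $\Ext^1$ between simple modules for low-rank groups, and then tabulate. For each of the groups $H$ in the statement, $H^1(H,V)=\Ext^1_H(k,V)$, and by Lemma \ref{cnrep} we may assume $V$ has nontrivial restricted part, except we must remember the one genuine exception: $H=Sp_{2n}$ with $V$ the natural module, where $H^1(Sp_{2n},V)$ differs from $H^1(Sp_{2n},V^{[1]})$ --- this accounts for the entries $L(1,0)^{[1]}$ for $Sp_4$, $L(1,0,0)^{[1]}$ for $Sp_6$, and $L(1,0,0,0)^{[1]}$ for $Sp_8$, where the listed module is the \emph{twist} of the natural module and the untwisted natural module contributes nothing. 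Writing $V=L(\lambda)\otimes L(\mu)^{[1]}$ with $L(\lambda)$ restricted and nontrivial, generic cohomology (again Lemma \ref{cnrep}) and the long exact sequences relating $\Ext^1_H$ to $\Ext^1$ over Frobenius kernels let one further reduce: nonvanishing forces $\mu$ to be either $0$ or a small fundamental weight, so there are only finitely many candidate $V$ of each bounded dimension.

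First I would handle $SL_2$: here $H^1(SL_2, L(p-2)\otimes L(1)^{[1]})$ is one-dimensional and all other simple modules have vanishing $H^1$; this is completely classical (it is essentially the computation of $H^1$ of the Frobenius kernel, or can be read off from the block structure of $SL_2$). Then $SL_3$ and $Sp_4$: for these one cites the explicit determination of $\Ext^1_H(k,L(\lambda))$ --- for $SL_3$ this is in the work of Yehia / Andersen and for $Sp_4$ in work of Ye or Andersen--Jorgensen--Landrock; alternatively for the restricted and twisted-by-fundamental cases one can simply invoke Lübeck's tables (cited in the footnote) together with Lemma \ref{cnrep}. For $G_2$ with $p=2,3$ or $p\geq 13$ one uses the same strategy; the small-prime cases $p=2,3$ are small enough to do by hand or to extract from existing $G_2$-cohomology computations, and the $p\geq 13$ case is generic cohomology where $G_2$ is "large" relative to $p$ so that $H^1$ of Frobenius kernels is controlled by the nullcone and one gets the five listed families. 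Finally $SL_4$, $Sp_6$, $Sp_8$ at $p=2$: these are again finite checks --- the restricted simple modules are few, Lübeck's data gives $H^1(H,L(\lambda))$ for $\lambda$ restricted, and the modules of the form $L(\lambda)\otimes L(\mu)^{[1]}$ with $\mu$ a fundamental weight are pinned down using $H^1(H,L(\lambda)\otimes L(\mu)^{[1]})\cong\Hom(L(\mu),\text{(something built from }H^1\text{ of the first Frobenius kernel)})$ via the Lyndon--Hochschild--Serre spectral sequence for $H_1\trianglelefteq H$.

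The main obstacle is not any single deep theorem but rather assembling a \emph{complete and correct} list: one must be certain no simple module has been missed, which requires a uniform bound showing that for $\dim V$ larger than the relevant root-system data (or for $\mu$ larger than a fundamental weight) $H^1$ necessarily vanishes. I expect to get this from generic cohomology plus the fact that $H^1$ of the first Frobenius kernel $H_1$ on $L(\lambda)$ is a module for $H/H_1\cong H$ whose weights are severely constrained (they lie in $p$ times a small set coming from the simple roots), so that after untwisting only $\mu\in\{0\}\cup\{\text{fundamental weights of the dual}\}$ can occur, and the contribution is at most one-dimensional in each case --- which is where the "at most one-dimensional" assertion comes from. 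The dimension data in the right-hand columns is then a routine by-product, computed from Weyl's formula and the known structure of the small Weyl modules (e.g. using $\bigwedge^r L(\omega_1)=L(\omega_r)$ for type $A$ and the explicit decompositions of $V(\omega_2)$ in type $C$ at $p=2$ recalled in Section 2).
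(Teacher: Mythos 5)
Your proposal is correct and follows essentially the same route as the paper: the paper's proof is simply a list of citations to the complete determinations of $H^1(H,L(\lambda))$ in the literature (Cline for $SL_2$, Yehia for $SL_3$, Ye for $Sp_4$ with $p\geq 3$, Liu--Ye for $G_2$ with $p\geq 13$, Sin for $Sp_4$ and $G_2$ at small primes, and Dowd--Sin for $SL_4$, $Sp_6$, $Sp_8$ at $p=2$), which is exactly the reduction-to-known-$\Ext^1$-computations you describe. The extra machinery you sketch (the Frobenius-kernel/LHS reduction controlling the twisted part $\mu$, and the $Sp_{2n}$ natural-module exception from Lemma \ref{cnrep}) is sound and is what underlies those cited results, but the paper does not need to reprove it.
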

\begin{proof}These are special cases from \cite{Cli79}  for $SL_2$, \cite[4.2.2]{Yeh82}  for $SL_3$, \cite{Ye90} for $Sp_4$, $p\geq 3$, \cite{LY93} for $G_2$ ($p\geq 13$), \cite[Proposition 2.2]{Sin94} for $Sp_4$ ($p=2$), \cite[Proposition 3.4]{Sin94} for $G_2$ ($p=3$), \cite[II.\S2.1.6, II.\S2.2.4, II.\S3.3.6, III.\S2.2.4]{DS96} for $SL_4$, $Sp_6$, $G_2$  and $Sp_8$, respectively, when $p=2$.\end{proof}

\begin{lemma}\label{h1forg2}Let $G=G_2$ over a field of characteristic $5$ and let $L$ be a simple module for $G$ with $H^1(G,L)\neq 0$. Then $\dim L>56$.
\end{lemma}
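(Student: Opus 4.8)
The plan is to reduce, via the generic‑cohomology statement of Lemma~\ref{cnrep} and the linkage principle, to a short list of weights that can then be handled with the dimension data of \cite{Lub01}.

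First I would reduce to the case that the highest weight of $L$ has non‑trivial restricted part. Write $L=L(\lambda)$. If $\lambda=0$ then $H^1(G,L)=H^1(G,k)=0$, so $\lambda\neq 0$; letting $j$ be least with $\lambda_j\neq 0$ in the $p$‑adic expansion $\lambda=\sum_i p^i\lambda_i$, we have $\lambda=p^j\mu$ and $L(\lambda)=L(\mu)^{[j]}$ by Steinberg's tensor product theorem, where $\mu$ has non‑zero restricted part. Since $G=G_2$ is not of type $Sp_{2n}$, Lemma~\ref{cnrep} gives $H^1(G,L(\mu))\cong H^1(G,L(\mu)^{[j]})=H^1(G,L)\neq 0$, and $\dim L(\mu)=\dim L$. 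So I may replace $L$ by $L(\mu)$ and henceforth assume $\lambda=\lambda_0+p\lambda'$ with $\lambda_0\neq 0$ restricted.

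Next I would apply the linkage principle: since $H^1(G,L(\lambda))=\Ext^1_G(k,L(\lambda))\neq 0$, the weight $\lambda$ must lie in the principal block, i.e.\ $\lambda\in W_p\cdot 0$. For $G_2$ the root lattice equals the weight lattice $X$, so $W_p\cdot 0=W(\rho+5X)-\rho$, and computing the $W$‑orbit of $\rho$ modulo $5X$ shows that $\lambda\in W_5\cdot 0$ only if $\lambda\equiv(0,0),(3,1),(4,1),(4,2),(3,3)$ or $(0,2)\pmod 5$. If $\lambda'=0$, then $\lambda=\lambda_0$ is one of the restricted weights $(3,1),(4,1),(4,2),(3,3),(0,2)$ (the case $(0,0)$ being excluded since $H^1(G,k)=0$), and \cite{Lub01} gives $\dim L(\lambda_0)>56$ in each case. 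If $\lambda'\neq 0$, then $\lambda$ has at least two non‑zero $p$‑adic digits, so by Steinberg's theorem $\dim L(\lambda)$ is a product of at least two dimensions of non‑trivial irreducible $G_2$‑modules; the two smallest such dimensions are $7$ (for $L(\omega_1)$) and $14$, so either $\dim L(\lambda)\geq 7\cdot 14>56$ and we are done, or $\lambda$ has exactly two non‑zero digits, both equal to $\omega_1$, forcing $\lambda=\omega_1+p^b\omega_1$ for some $b\geq 1$. But then $\lambda\equiv(1,0)\pmod 5$, which is not in the list above, so $L(\lambda)$ lies outside the principal block and $H^1(G,L(\lambda))=0$, contradicting the hypothesis. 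Hence $\dim L>56$ in every case.

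The one place where care is needed is this final alternative: $L(\omega_1+p^b\omega_1)$ has dimension $49\leq 56$, so the bound would fail for it were its cohomology non‑zero. One could try to settle this by a Lyndon--Hochschild--Serre computation for $1\to G_1\to G\to G/G_1\to 1$, which would require determining $H^1(G_1,L(\omega_1))$ at $p=5$ — awkward, as $5$ is less than the Coxeter number $6$ of $G_2$; the point of the argument above is that the linkage principle disposes of it at once, because $\omega_1+p^b\omega_1$ is simply not linked to $0$. The remaining inputs — the $W$‑orbit of $\rho$ modulo $5$, and the dimensions of $L(3,1),L(4,1),L(4,2),L(3,3),L(0,2)$ from \cite{Lub01} — are routine.
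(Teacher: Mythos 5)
Your argument is correct and follows essentially the same route as the paper: reduce to a non-trivial restricted part via Lemma~\ref{cnrep}, use Steinberg's tensor product theorem to force $\dim L\geq 7\cdot 14>56$ in the non-restricted case except for $L(\omega_1)\otimes L(\omega_1)^{[b]}$ of dimension $49$, and kill that last case by the linkage principle --- exactly as the paper does. The only divergence is in the restricted case, where the paper checks from \cite{Lub01} that all Weyl modules of dimension less than $97$ are irreducible (so $H^1(G,L(\lambda))\hookrightarrow H^0(G,H^0(\lambda)/L(\lambda))=0$), whereas you first cut down by linkage to the five restricted weights in $W_5\cdot 0$ and then read off their dimensions from \cite{Lub01}; both variants rest on the same data, and the paper itself flags ``or the linkage principle'' as the alternative.
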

\begin{proof}One reduces to the case where the restricted part of $L$ is non-trivial using \ref{cnrep} so we may assume $L=M$. Start with the case that $M$ is restricted. One can use the data from \cite{Lub01} to establish that all Weyl modules of dimension less than $97$ are irreducible. But then $H^1(G,L(\lambda))\cong H^0(G,H^0(\lambda)/\soc_G(H^0(\lambda)))=0$.

If $M$ is not restricted, then it is $M=M_1\otimes M_2^{[1]}$ for $M_1$ restricted and $M_2$ non-trivial. The lowest dimension $M_1$ and $M_2$ can have is $7$, the next is $14$, but $14\times 7>56$, so we conclude $M_1=L(1,0)$ and $M_2=L(1,0)^{[r]}$. Now by \cite[1.15]{LS96} (or the linkage principle), one gets $H^1(G,M)=0$.\end{proof}

The next lemma is useful for establishing $L'$-irreducible embeddings $\bar H\leq L'$ when $L'$ and also for deciding when a subgroup $H$ is in a parabolic of a classical subgroup $M$ of $G$.

\begin{lemma}[{\cite[p32-33]{LS96}}] \label{girclassical}Let $G$ be a simple algebraic group of classical type, with natural module 
$V = V_G(\lambda_1)$, and let $H$ be a $G$-irreducible subgroup of $G$. 
\begin{enumerate}\item If $G = A_n$, then $H$ acts irreducibly on $V$ 
\item If $G = B_n$, $C_n$, or $D_n$ with $p\neq 2$, then $V\downarrow H = V_1 \perp\dots\perp V_k$ with the $V_i$ all non-degenerate, irreducible, and inequivalent as $X$-modules.
\item If $G = D_n$ and $p = 2$, then $V\downarrow H = V_1\perp\dots\perp V_k$ with the $V_i$ all non-degenerate, $V_2\downarrow H,\dots,V_k\downarrow H$, irreducible and inequivalent, and if $V_1\neq 0$, $H$ acting on $V_1$ as a $B_{m-1}$-irreducible subgroup where $\dim V_1 = 2m$.\end{enumerate}\end{lemma}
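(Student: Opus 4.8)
The plan is to reduce $G$-irreducibility to a statement about invariant subspaces and then decompose $V$ by peeling off non-degenerate pieces. First recall the standard description of parabolics of a classical group (Borel--Tits, or a direct check): every proper parabolic lies in a maximal one, the maximal parabolics of $A_n=SL(V)$ are the stabilizers of the nonzero proper subspaces, and those of $B_n,C_n,D_n$ are the stabilizers of the nonzero totally singular subspaces (with the usual caveat that for $D_n$ one allows the two families of maximal totally singular subspaces). Here \emph{totally singular} means isotropic for the defining form: the (symmetric or alternating) bilinear form $B$ when $p\neq2$, but the quadratic form $Q$ (equivalently, the locus where $Q$ vanishes) when $G$ is orthogonal and $p=2$, which is genuinely stronger than being isotropic for the polar form $B$. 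Hence a connected subgroup $H$ is $G$-irreducible if and only if it fixes no nonzero totally singular subspace of $V$. For $G=A_n$ every subspace is totally singular, so this is exactly irreducibility of $H$ on $V$, proving (i).

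For part (ii), $p\neq2$, the quadratic and bilinear forms determine one another, so ``totally singular'' means ``totally isotropic''. For any $H$-submodule $W\leq V$ the subspace $W\cap W^\perp$ is $H$-invariant and totally isotropic, hence zero; thus \emph{every} $H$-submodule is non-degenerate. Choosing an irreducible $H$-submodule $V_1$ gives $V=V_1\perp V_1^\perp$ with $V_1^\perp$ again a non-degenerate space on which $H$ acts with all submodules non-degenerate, so induction on $\dim V$ yields $V\downarrow H=V_1\perp\cdots\perp V_k$ with each $V_i$ irreducible and non-degenerate. If $V_i\cong V_j$ with $i\neq j$, then by Schur's lemma the $H$-invariant form on $V_i$ is unique up to scalar, so composing an $H$-isomorphism $\varphi\colon V_i\to V_j$ with a suitable scalar (the required square root exists since $k$ is algebraically closed) makes the diagonal $\{v+\varphi(v):v\in V_i\}$ a nonzero $H$-invariant totally isotropic subspace, contradicting $G$-irreducibility; so the $V_i$ are pairwise inequivalent.

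For part (iii), $D_n$ with $p=2$, one works with $Q$, whose polar form $B$ is a non-degenerate alternating form. For an $H$-submodule $W$, the form $B$ vanishes on $W\cap W^\perp$, so $Q$ is additive there and its zero locus (the ``singular radical'' of $W$) is an $H$-submodule; $G$-irreducibility forces it to be zero, i.e.\ $Q$ is anisotropic on $W\cap W^\perp$. Since $k$ is algebraically closed, an anisotropic quadratic space is at most $1$-dimensional, so $\dim(W\cap W^\perp)\leq1$ for every $H$-submodule $W$; this rigidity is what collapses all intermediate configurations. Peeling off non-degenerate irreducible summands as in the previous paragraph (the inequivalence argument still works, noting that $\ker(Q_i-\varphi^*Q_j)$ is an $H$-submodule), we reach $V=V_1\perp V_2\perp\cdots\perp V_k$ with $V_2,\dots,V_k$ irreducible, non-degenerate and pairwise inequivalent and $V_1$ a non-degenerate quadratic space, necessarily of even dimension $2m$, with no non-degenerate irreducible $H$-submodule of dimension $\geq2$. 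If $V_1\neq0$, a minimal $H$-submodule $W\leq V_1$ is not non-degenerate, so $W\subseteq W^\perp$, whence $W=\langle e\rangle$ is $1$-dimensional with $Q(e)\neq0$ and $H$-fixed. Then $H|_{V_1}$ lies in $\mathrm{Stab}_{SO(V_1)}(e)$, which is $SO(e^{\perp_{V_1}})\cong SO_{2m-1}=B_{m-1}$ because $e^{\perp_{V_1}}$ is a $(2m-1)$-dimensional non-degenerate quadratic space, and $H$ acts $B_{m-1}$-irreducibly on it since any nonzero totally singular $H$-submodule of $e^{\perp_{V_1}}$ would be totally singular in $V$.

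The $p\neq2$ cases are essentially formal once the translation to isotropic subspaces is in place; the main obstacle is the $p=2$ orthogonal case, where one must respect the strictly stronger notion of singularity, correctly isolate the defective summand $V_1$ and recognise the induced $B_{m-1}$-structure on $e^{\perp_{V_1}}$, and exploit algebraic closedness to bound anisotropic spaces by dimension $1$, which is precisely what makes the resulting list so short.
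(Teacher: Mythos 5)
The paper itself gives no proof of this lemma --- it is imported verbatim from \cite[pp.~32--33]{LS96}, where it rests on exactly the reduction you use, namely that the proper parabolic subgroups of a classical group are the stabilizers of the nonzero totally singular subspaces of the natural module, so that $G$-irreducibility of a connected subgroup is equivalent to fixing no such subspace. Your argument is correct, and it handles properly the two points that actually require care in characteristic $2$: the inequivalence of the non-degenerate summands (where the diagonal subspace must be shown singular for the quadratic form $Q$ and not merely isotropic for the polar form --- your semilinear-kernel argument does this) and the isolation of the defective summand $V_1$ via the bound $\dim(W\cap W^{\perp})\le 1$ together with the identification of the stabilizer of a non-singular vector with $B_{m-1}=SO_{2m-1}$.
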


On a couple of occasions we need to know the reductive maximal subgroups of $E_6$ and $E_7$.

\begin{lemma}[{c.f. \cite[Theorem 1]{LS04}}]\label{excepmax}Let $G$ be an exceptional group not of type $E_8$ and let $M$ be a closed, connected, reductive maximal subgroup of $G$ without factors of $A_1$ or connected centre. Then $M$ is in the following list

\begin{center}{\small\begin{tabular}{c|l|l}$G$ & Subsystem $M$ & Non-subsystem $M$\\\hline
$G_2$ & $A_2$, $\tilde A_2$ $(p=3)$&\\
$F_4$ & $B_4$, $C_4 (p=2)$, $D_4$, $\tilde D_4$ $(p=2)$, $A_2\tilde A_2$&$G_2$ $(p=7)$\\
$E_6$ & $A_2^3$ & $A_2$ $(p\neq 2,3)$, $G_2$ $(p\neq 7)$, \\ & &$C_4$ $(p\neq 2)$, $F_4$, $A_2G_2$.\\
$E_7$ & $A_7$, $A_2A_5$ & $A_2$ $(p\geq 5)$, $G_2C_3$\end{tabular}}\end{center}\end{lemma}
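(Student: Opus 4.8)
The plan is to extract this list directly from the Liebeck--Seitz classification of maximal connected subgroups of the exceptional algebraic groups \cite{LS04}, keeping only the reductive ones and then discarding whatever the hypotheses rule out. Recall that \cite[Theorem 1]{LS04} asserts that a maximal closed connected subgroup $M$ of positive dimension in an exceptional group $G$ is either a (maximal) parabolic subgroup, or a reductive subgroup of maximal rank, or a member of a short explicit list of reductive ``non-subsystem'' subgroups whose occurrence is governed by $p$. Since we are assuming $M$ is reductive, the parabolic alternative does not arise, so $M$ lies in one of the two remaining families; I would treat $G=G_2,F_4,E_6,E_7$ one at a time (the cases $G_2,F_4$ being recorded for completeness, as they are used elsewhere in the paper).

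For the ``Non-subsystem $M$'' column I would simply copy the relevant rows of \cite[Theorem 1]{LS04} for each such $G$ and strike out every $M$ that has a composition factor of type $A_1$ or a positive-dimensional centre: this removes the maximal $A_1$'s occurring for large $p$ in every $G$, as well as items such as $A_1G_2\le F_4$ and $A_1A_1$, $A_1G_2$, $A_1F_4\le E_7$, leaving exactly $G_2$ $(p=7)$ in $F_4$; $A_2$ $(p\neq2,3)$, $G_2$ $(p\neq7)$, $C_4$ $(p\neq2)$, $F_4$ and $A_2G_2$ in $E_6$; and $A_2$ $(p\geq5)$ and $G_2C_3$ in $E_7$. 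For the ``Subsystem $M$'' column I would enumerate the maximal-rank reductive subgroups that are maximal in $G$ by running one step of the Borel--de Siebenthal algorithm on the extended Dynkin diagram: deleting a node yields $A_1\tilde A_1$, $A_2$ for $G_2$; $B_4$, $A_1C_3$, $A_2\tilde A_2$, $D_4$ for $F_4$; $A_1A_5$, $A_2^3$, $D_5T_1$ for $E_6$; and $A_1D_6$, $A_7$, $A_2A_5$, $E_6T_1$ for $E_7$. Removing those with an $A_1$ factor or connected centre leaves $A_2$; $B_4$, $D_4$, $A_2\tilde A_2$; $A_2^3$; and $A_7$, $A_2A_5$ respectively. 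Finally one adds the characteristic-$p$ ``dual'' subsystem subgroups permitted by the definition given in the introduction: for $p=3$ the group $G_2$ also has a maximal $\tilde A_2$, and for $p=2$ the group $F_4$ also has $C_4$ and $\tilde D_4$; there is no such phenomenon in the simply laced groups $E_6$, $E_7$.

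The one point requiring genuine care --- and the step I expect to be the main obstacle --- is the bookkeeping of these special-characteristic entries together with faithfully transcribing all the $p$-conditions attached to the non-subsystem subgroups in \cite{LS04} (for instance $C_4,A_2\le E_6$ need $p\neq2$, resp.\ $p\neq2,3$; $G_2\le E_6$ needs $p\neq7$, whereas $G_2\le F_4$ needs $p=7$; and $A_2\le E_7$ needs $p\geq5$). Beyond keeping these conditions straight, every line of the table is a direct quotation from the classification, so no further argument is needed.
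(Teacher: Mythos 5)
The paper offers no proof of this lemma beyond the citation to \cite[Theorem 1]{LS04} together with the known classification of maximal-rank reductive subgroups, and your proposal --- quote those classifications and strike out everything with an $A_1$ factor or positive-dimensional connected centre, keeping track of the characteristic restrictions --- is exactly the intended argument, and your resulting table agrees with the paper's. The only point worth correcting is that $D_4$ and $\tilde D_4$ in $F_4$ do \emph{not} arise from a single deletion on the extended Dynkin diagram (as connected groups they lie inside $B_4$, resp.\ $C_4$ when $p=2$); they belong in the table because their normalizers $D_4.S_3$ and $\tilde D_4.S_3$ are maximal closed subgroups, so the ``subsystem'' column should be read off from the Liebeck--Seitz maximal-rank classification rather than from one step of Borel--de Siebenthal.
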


\section{Proof of Theorem 1}
In \cite{SteG2} and \cite{SteF4} we find all semisimple non-$G$-cr subgroups of $G$ where $G$ is $G_2$ and $F_4$ respectively. So the result follows for these cases. It remains to deal with the cases $G=E_6$, $E_7$ and $E_8$. We start by honing the Liebeck and Seitz result to show that if $H$ is a closed, connected, simple subgroup of $G$ with root system $X$ and $p$ is not in our list $N(X,G)$ then $H$ is $G$-cr. Then we check that the examples given in Table \ref{t1} are indeed non-$G$-cr.

A filtration for unipotent radicals of parabolics by $L$-modules is given in \cite{ABS90}; to find the isomorphism types of the composition factors is a simple calculation using the root system of $G$. Summarising the results for our situation, we get:

\begin{lemma}[{\cite[3.1]{LS96}}]\label{abs}Let $G=E_6,\ E_7$ or $E_8$ and let $P=LQ$ be a parabolic subgroup of $G$. The $L'$-composition factors within $Q$ have the structure of high weight modules for $L'$. If $L_0$ is a simple factor of $L'$, then the possible high weights $\lambda$ of non-trivial $L_0$-composition factors and their dimensions are as follows:
\begin{enumerate}\item $L_0=A_n$: $\lambda=\lambda_j$ or $\lambda_{n+1-j}$ ($j=1,2,3$), dimensions $\left(\begin{array}{c}n+1\\j\end{array}\right)$;
\item $L_0=D_n$: $\lambda=\lambda_1,\ \lambda_{n-1}$ or $\lambda_n$, dimensions $2n,\ 2^{n-1}$ and $2^{n-1}$ resp.;
\item $L_0=E_6$: $\lambda=\lambda_1$ or $\lambda_6$, dimension $27$ each;
\item $L_0=E_7$: $\lambda=\lambda_7$, dimension $56$.\end{enumerate}\end{lemma}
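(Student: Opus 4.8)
The statement records the $L'$-composition factors appearing in the unipotent radical $Q$ of a parabolic $P = LQ$ of $G = E_6, E_7, E_8$, together with their high weights and dimensions; this is essentially a direct computation with root systems, and the content is really Azad--Barry--Seitz \cite{ABS90} reorganised for the cases at hand, as stated. The plan is to work directly with the root datum. Fix a Borel $B \geq T$ of $G$ with root system $\Phi$ and simple roots $\{\alpha_1, \dots, \alpha_n\}$, and recall that a parabolic $P \supseteq B$ is determined by a subset $J$ of the simple roots: $L' = \langle U_{\pm\alpha} : \alpha \in J\rangle$ has root system $\Phi_J = \Phi \cap \mathbb{Z}J$, while $Q = \prod_{\alpha \in \Phi^+ \setminus \Phi_J} U_\alpha$. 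The grading on $Q$ is by the "level" of a root relative to $J$: writing each $\beta \in \Phi^+ \setminus \Phi_J$ as $\beta = \sum_i c_i(\beta)\alpha_i$, set $\mathrm{lev}(\beta) = \sum_{i \notin J} c_i(\beta)$, and put $Q_m = \prod_{\mathrm{lev}(\beta) \geq m} U_\beta$. Then each $Q_m / Q_{m+1}$ is an $L$-module, spanned by the root spaces at level exactly $m$, and $L'$ acts on it through its adjoint action on the corresponding sum of root subspaces of $\Lie(G)$ (or on $U_\beta$'s directly); this is precisely the filtration of \ref{abs}, which is the reference I would cite for the module structure rather than reprove.

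The substantive step is then to identify, for each simple factor $L_0$ of $L'$ (with root system a connected component of $\Phi_J$), which irreducible $L_0$-modules can occur as composition factors of these level-$m$ layers, and to bound the layers. The key observation is that if $\beta$ has level $m \geq 1$, its $T$-weight restricted to the maximal torus of $L_0$ is the restriction of $\beta$, and among such $\beta$ at a fixed level the highest (in the $L_0$-dominance order) is a dominant weight $\lambda$ giving the high weight of a composition factor. Because $G$ is one of $E_6, E_7, E_8$, every coefficient $c_i(\beta)$ for the highest root of $G$ is at most $6$, and for the simple roots \emph{outside} $J$ the relevant coefficients that survive on $L_0$ are very small; concretely one checks that the restriction of any root of $G$ to the coweight lattice of a component $A_n$, $D_n$, $E_6$ or $E_7$ of $L'$ is a minuscule-type fundamental weight (a $\lambda_j$ with $j \in \{1,2,3\}$ and its dual for $A_n$; $\lambda_1, \lambda_{n-1}, \lambda_n$ for $D_n$; $\lambda_1$ or $\lambda_6$ for $E_6$; $\lambda_7$ for $E_7$) — this is forced by the fact that no node of an $E$-type Dynkin diagram is attached to another node with a coefficient exceeding what these bounds allow, together with the classification of which fundamental weights of $A_n, D_n, E_6, E_7$ arise as restrictions of roots. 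The dimensions $\binom{n+1}{j}$, $2n$, $2^{n-1}$, $27$, $56$ are then just the dimensions of the corresponding Weyl modules (which, being minuscule, are irreducible in all characteristics, so "composition factor" and "Weyl module" agree here).

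The concrete way I would carry this out is: (1) for each $G \in \{E_6, E_7, E_8\}$, enumerate the maximal parabolics $P_i$ (one per simple root $\alpha_i$) and, for each, read off from the Bourbaki tables \cite{Bourb82} the coefficients $c_j(\beta)$ of all positive roots, thereby listing the level-$m$ layers and their $L'$-weights; (2) observe that a general parabolic is an intersection of maximal ones, so its layers are obtained by the evident restriction/projection, and the list of possible $L_0$-composition factors only shrinks; (3) match the highest weight in each layer against the minuscule fundamental weights listed in the statement, reading off dimensions. The main obstacle — though it is bookkeeping rather than conceptual — is simply the volume of the root-system case analysis for $E_8$, where some layers $Q_m/Q_{m+1}$ with $m$ as large as $5$ or $6$ occur; one must be careful that in every such high layer the $L_0$-weights still only realise the short list above and no "unexpected" fundamental weight (e.g. $\lambda_2$ of an $A_n$ beyond $j \leq 3$, or a spin weight of a $D_n$ of the wrong parity) sneaks in. Since Lemma \ref{abs} is already quoted from \cite{LS96} with exactly this content, in practice this "proof" is a citation together with the remark that the verification is the finite root-theoretic check just described.
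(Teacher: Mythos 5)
Your proposal is correct and matches the paper's treatment: the paper gives no proof of Lemma \ref{abs} at all, simply citing \cite[3.1]{LS96} after remarking that the filtration comes from \cite{ABS90} and that identifying the composition factors "is a simple calculation using the root system of $G$" --- which is exactly the level-filtration-plus-root-restriction computation you describe. The only point worth flagging is that the real content of the finite check is ruling out dominant restrictions that are sums of two or more fundamental weights (all pairings $\langle\beta,\alpha^\vee\rangle$ being $0$ or $\pm1$ in simply-laced type), not just bounding which single $\lambda_j$ occur; you gesture at this but correctly defer it to the cited verification.
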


\begin{corollary}\label{dimcor}With the hypotheses of the lemma, let $V$ be an $L'$-composition factor of $Q$ and suppose $L'$ does not contain a component of type $A_1$. Then either $\dim V\leq 60$ or $G=E_8$, $L'=D_7$ and $V$ is a spin module for $L'$ of dimension $64$.

If $G=E_7$, $\dim V\leq 35$; if $G=E_6$, $\dim V\leq 20$.\end{corollary}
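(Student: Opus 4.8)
The plan is to derive this from Lemma~\ref{abs} by a finite inspection of the Levi subgroups of $E_6$, $E_7$ and $E_8$. Up to conjugacy $L'$ is the semisimple part of a standard Levi subgroup, so its Dynkin diagram is a \emph{proper} subdiagram $J$ of that of $G$ (proper because $P\neq G$); the simple factors of $L'$ are the connected components of $J$, and as $L'$ has no $A_1$ factor each such component $L_0$ has type $A_n$ $(n\ge 2)$, $D_n$ $(n\ge 4)$, $E_6$ or $E_7$, with $\mathrm{rk}\,L'\le\mathrm{rk}\,G-1\le 7$.

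First I would turn the dimension question into a product estimate. A composition factor $V$ of $Q$ as an $L'$-module is an irreducible $L'$-module, hence an outer tensor product of irreducible modules $V_{L_0}$, one for each simple factor $L_0$; restricting $V$ to a single $L_0$ shows $V_{L_0}$ occurs as a composition factor of $Q\downarrow L_0$, so by Lemma~\ref{abs} it is either trivial or has one of the high weights listed there. Writing $m(L_0)$ for the largest module dimension in that list, so that $m(A_n)=\binom{n+1}{3}$ for $n\ge 4$, $m(A_3)=6$, $m(A_2)=3$, $m(D_n)=\max\{2n,2^{n-1}\}$, $m(E_6)=27$ and $m(E_7)=56$, we obtain $\dim V\le\prod_{L_0}m(L_0)$.

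Then comes the enumeration. For the maximal standard Levis (delete a single node) the admissible types with no $A_1$ factor, with their values $\prod m(L_0)$, are: $A_5$ $(20)$ and $D_5$ $(16)$ in $E_6$; $A_6$ $(35)$, $D_6$ $(32)$, $A_4A_2$ $(30)$ and $E_6$ $(27)$ in $E_7$; and $D_7$ $(64)$, $A_7$ $(56)$, $A_4A_3$ $(60)$, $D_5A_2$ $(48)$ and $E_7$ $(56)$ in $E_8$. Deleting further nodes only splits a component $D$ (of type $A$, $D$, $E_6$ or $E_7$) into smaller pieces, and a direct check of the possibilities gives $\prod m(C)\le m(D)$ over the non-$A_1$ components $C$ of any subdiagram of $D$; hence the product for an arbitrary Levi is bounded above by the corresponding maximal-Levi value. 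This yields $\dim V\le 20$ when $G=E_6$ and $\dim V\le 35$ when $G=E_7$, and for $G=E_8$ it gives $\dim V\le 64$, with $\dim V\le 60$ unless $L'$ lies in the maximal Levi of type $D_7$. To finish the $E_8$ case I would note that every proper sub-Levi of $D_7$ has $\prod m(L_0)\le 36$, while for $L'=D_7$ itself Lemma~\ref{abs}(ii) lists the non-trivial composition factors of $Q$ as $V(\lambda_1)$ (dimension $14$) and the two spin modules $V(\lambda_6),V(\lambda_7)$ (dimension $64$); so the only composition factors of dimension exceeding $60$ are $D_7$-spin modules of dimension exactly $64$, as claimed. (In particular, since $D_7$ has full corank in $E_8$, no Levi of $E_8$ has $D_7$ together with another factor, so the exceptional case is exactly $L'=D_7$.)

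The main obstacle is making the enumeration watertight: one must check that the lists of maximal Levis of $E_6$, $E_7$, $E_8$ are complete and that the splitting inequality $\prod m(C)\le m(D)$ holds in every instance, so that borderline configurations — for example $A_3A_3$ as a subdiagram of $E_7$, or $A_6A_2$ and $D_6A_1$ as subdiagrams of $E_8$ — are correctly identified as \emph{not} being Levi subgroups of the ambient group, and no composition factor of dimension $36$, $63$ or a value above $64$ slips through. This is routine Dynkin-diagram bookkeeping, but it carries essentially all the content of the argument; everything else is immediate from Lemma~\ref{abs}.
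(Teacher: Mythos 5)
Your proof is correct and follows essentially the same route as the paper: decompose $V$ as an outer tensor product over the simple factors of $L'$, bound each tensor factor by the dimensions in Lemma~\ref{abs}, and enumerate the relevant Levi subgroups (the paper simply lists the two-factor Levis of $E_8$ directly, e.g.\ identifying $A_3A_4$ with $L(\lambda_2)\otimes L(\lambda_2)$ of dimension $60$ as the extremal non-simple case, rather than organising the count via maximal Levis and a splitting inequality). Your bookkeeping checks out, so the only difference is presentational.
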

\begin{proof}
If $L'$ is itself simple, this follows from the lemma. Also, if $G=E_6$ or $E_7$ then the number of positive roots is less than $56$, so the result is clear. So we may assume $G=E_8$. The possibilities for $L$ are $A_2A_2$, $A_2A_3$, $A_2A_4$, $A_3A_3$, $A_3A_4$, $A_2D_4$ and $A_2D_5$. Since $V$ is simple, it must be a tensor product of simple modules for the two factors, with the simple modules occurring in the lemma. One checks that the highest dimension possible for this is when $L=A_3A_4$, $V=L(\lambda_2)\otimes L(\lambda_2)$ with $\dim V=6\times 10=60$.

For the second part, if $G=E_7$ and $L'$ is simple this follows from Lemma \ref{abs}, the largest case occurring when $L'=A_6$. If $L'$ is not simple, then it is $A_4A_2$, $A_3A_2$ or $A_2A_2$. Then the largest possible dimension comes from the first option and is at most $10\times 3=30\leq 35$-dimensional.
\end{proof}

\subsection*{$p\not\in N(X,G)$ implies that $H$ is $G$-cr}

Since we are building on \cite[Theorem 1]{LS96}, we need only deal with the struck out numbers in the table in Corollary 2.

{\it Proof of the first statement of Theorem 1:}

Looking for a contradiction, we will assume $H$ is non-$G$-cr; then we can make the following assumption, using \ref{comp}:
\begin{quote}\it We have $H\leq P=LQ$ with $\bar H$ being $L$-ir, and either (i) $H$ is a complement to $Q$ in $\bar HQ$ and there exists a composition factor $V$ of $Q$ with $H^1(\bar H,V)\neq 0$; or (ii) $p=2$, $H=SO_{2n}$, $\bar H=Sp_{2n}$ and $V=L(\omega_1)$ appears as a composition factor of $Q$.\end{quote}

The cases to consider are \begin{align*}(X,G,p)\in\{&(B_2,\bullet,3),\ (G_2,\bullet,5),\ (G_2,E_6,3),\ (A_2,\bullet,5),\ (A_3,E_6,2),\ \\ &(A_3,E_7,2),\ (B_4,E_6,2),\ (B_4,E_7,2),\ (D_4,E_6,2),\ \\
& (C_3,\bullet,2),\ (C_4,\bullet, 2)\},\end{align*} where $\bullet$ can be replaced by $E_6$, $E_7$ or $E_8$.

By Corollary \ref{dimcor} the largest possibility for the dimension of $V$ occurs when $G=E_8$, $L'=D_7$ and $V$ has dimension $64$. By \ref{h1fora1a2b2a3}, there is no such $V$ when $H=G_2$ and $p=5$. This rules out $(G_2,\bullet,5)$. 

Suppose $H$ is of type $B_2$ and $p=3$. Since $\bar H$ is $D_7$-irreducible, it must have act on the natural module $V_{14}$ for $L'$ as specified in \ref{girclassical}. Checking \cite{Lub01}, one finds the simple untwisted representations of dimension no more than $14$ are $L(0,1)$, $L(1,0)$, $L(0,2)$, $L(2,0)$ with dimensions $4$, $5$, $10$ and $14$, respectively. But $L(0,1)$ is the natural representation for $Sp_4$, thus carries a symplectic structure, which cannot be non-degenerate. Hence $V_{14}\downarrow \bar H=L(2,0)$; moreover, as $L(2,0)$ is an irreducible Weyl module when $p=3$, the embedding $\bar H\hookrightarrow L'$ can be seen as the reduction mod $p$ of an embedding $\bar H_\mathbb Z \hookrightarrow L'_\mathbb Z$. Now \cite[Proposition 2.12]{LS96} gives that $V_\mathbb Z\downarrow \bar H_\mathbb Z$ is the irreducible Weyl module $V(1,3)$. Using \cite{Lub01} one can calculate the composition factors of a reduction mod $3$ of this module; one sees that $V\downarrow \bar H$ has composition factors $L(1,3)|L(2,1)|L(0,1)$. Since none of these modules appears in \ref{h1fora1a2b2a3}, this rules out $(X,G,p)=(B_2,\bullet,3)$.

By \ref{dimcor} the largest possibility for the dimension $V$ when $G=E_7$ is $35$; when $G=E_6$ it is $16$.

Then dimension considerations using \ref{h1fora1a2b2a3} and \ref{h1forg2} also rule out $(X,G,p)=(A_2,E_7,5)$ and $(G_2,E_6,3)$, respectively. 

For $(A_2,E_8,5)$, the fact that $V$ has dimension at least $54$ forces $L'=E_7$, $D_7$ or $A_7$ but simple $E_7$- and $D_7$-modules are self-dual, so the possibilities for $V$ coming from \ref{h1fora1a2b2a3} are discounted as they are not self-dual. Thus we may assume that $L'=A_7$ and $V=L(\omega_3)=\bigwedge^3(L(\omega_1))$. Since $\bar H$ is $L'$-ir, $L'$ must act irreducibly on the natural 8-dimensional module $V_8$ for $L'$. A check of \cite{Lub01} forces $V_8|L'=L(1,1)$. But $\bigwedge^3 L(1,1)$ has highest weights $(2,2)$ and $(0,3)$. But the weights appearing in \ref{h1fora1a2b2a3} are all higher than these (in the dominance order). This rules out $(A_2,E_8,5)$.

Consider next the case $(X,G,p)=(A_3,E_6,2)$. By \ref{dimcor} we have $\dim V\leq 20$ so \ref{h1fora1a2b2a3} shows that $V$ must be $14$-dimensional; this forces $L'=D_5$ or $A_5$. Examining low dimensional representations for $A_3$, it is easy to see using \ref{girclassical} that there is no $D_5$-irreducible embedding $\bar H\hookrightarrow D_5$, so we must have $\bar H\hookrightarrow L'=A_5$ by $V_6|\bar H=L(0,1,0)$. Here, $Q$ has factors $L(\lambda_3)=\bigwedge^3(V_6)$ and a trivial module. Now $L(0,1,0)$ has weights $\pm (0,1,0), \pm (1,0,-1), \pm (1,-1,1)$, so $\bigwedge^3L(0,1,0)$ has dominant weights $(0,0,2),\ (2,0,0)$ and $(0,1,0)$. These do not appear in \ref{h1fora1a2b2a3}. Thus $H^1(\bar H,\bigwedge^3 L(0,1,0))=0$ and this case is ruled out.

Let $(X,G,p)=(A_3,E_7,2)$. Again $V$ is at least $14$-dimensional. So $L'=A_5, A_6, D_5, D_6$ or $E_6$. Using \ref{girclassical} and \ref{excepmax} for $L'=D_5$ and $E_6$ respectively, one finds there are no $L'$-irreducible subgroups of type $A_3$. Thus $L'$ is $A_5$ or $D_6$; a similar analysis to the case $(A_3,E_6,2)$ rules out the former as an option. So we have $\bar H\leq A_3^2\leq L'=D_6$ by $V_{12}\downarrow \bar H=L(0,1,0) + L(0,1,0)^{[r]}$. Now $Q$ has $L$'-composition factors $k$ and $L(\omega_6)$, a spin module. We wish to calculate $L(\omega_6)\downarrow \bar H$. Since $\bar H\leq A_3^2$ it is instructive to work out $L(\omega_6)$ restricted to one of these factors. Using \cite[2.6 and 2.7]{LS96} this is $L(1,0,0)^4+L(0,0,1)^4$. Thus we must have $L(\omega_6)\downarrow A_3^2=(L(1,0,0),L(1,0,0))+(L(0,0,1),L(0,0,1))$ so that $L(\omega_6)\downarrow \bar H=L(1,0,0)\otimes L(1,0,0)^{[1]}+L(0,0,1)\otimes L(0,0,1)^{[1]}$.\footnote{This statement is made without loss of generality: one can embed with graph automorphisms to have dual versions of these modules.} Now \ref{h1fora1a2b2a3} implies $H^1(\bar H,Q)=0$.

In case $(B_4,E_6,2)$ we must have $\bar H\leq D_5$, with $Q$ a spin module for $L'$. But then $Q\downarrow \bar H=V\cong L(0001)$ using \cite[2.7]{LS96} is a spin module for $H$ with $V(0001)=L(0001)$. So $H^1(B_4,V)=0$ and this case is ruled out. 

Lastly take case $(X,G,p)=(C_3,\bullet,2)$ of type $C_3$. We need an $L'$-ir embedding of $\bar H$ in $L'$ and an $H$-composition factor $V$ of $Q$ with $H^1(H,V)\neq 0$. We will see this is impossible. As above, if $G=E_6$, $L'$ has to be type $A_5$, with $Q$ having $L'$-composition factors $k$ and $L(0,0,1,0,0)=\bigwedge^3L(1,0,0,0,0)$ Hence $Q$ has $\bar H$ composition factors which are $k$ or in $\bigwedge^3L(1,0,0)$ which has composition factors $L(0,0,1)|L(1,0,0)^2$. Since these do not appear in \ref{h1fora1a2b2a3} this case is ruled out. Similarly if $G=E_7$ or $E_8$ we must still have $L'=A_5$ and we must also consider the restrictions of $L(0,1,0,0,0)$ and its dual, $L(0,0,0,1,0)$ to $\bar H$. These are $\bigwedge^2L(1,0,0)\cong \bigwedge^4L(1,0,0)$ which also contain no composition factors with non-trivial $H^1$.

Since there are no embeddings of a subgroup of type $C_4$ into any proper Levi of $E_6$, this case is ruled out too.

This completes the proof of the first statement of Theorem 1.

\subsection*{$p\in N(X,G)$ implies the existence of a non-$G$-cr subgroup $H$ with root system $X$}

The examples when $G=G_2$ and $F_4$ were shown already in \cite[Theorem 1]{SteG2} and \cite[Theorem 1(A)(B)]{SteF4} to be non-$G$-cr, so we need only deal with the cases $G=E_6,\ E_7$ and $E_8$.

{\it Proof of the second part of Theorem 1: The subgroups listed in Table \ref{t1} are non-$G$-cr:}

The proof of many of these cases is similar. Let $H=E(X,G,p)$ for one of the examples in Table \ref{t1}. We locate $H$ within a parabolic subgroup of $G$ and establish the embedding $\bar H\leq L$. Next we take a low dimensional (faithful) $G$-module $V$ and calculate the restriction to $H$ and $\bar H$ of this $G$-module; in all cases under consideration these will be non-isomorphic. Thus we can conclude that since $V\downarrow H\not\cong V\downarrow \bar H$, $H$ is not even $GL(V)$-conjugate to $\bar H$, let alone $G$-conjugate to $\bar H$. Further, in all the cases under consideration we will conclude that if $H$ is non-$F_4$-cr, then it is also non-$E_r$-cr for $6\leq r\leq 8$ using the embeddings $F_4\leq E_6\leq E_7\leq E_8$; unfortunately we seem to need to do this mostly case by case. 

We will now give a few examples. 

\underline{$H=E(E_6,A_1,2)$}

Here $H$ is a subgroup of type $A_1$ in a subsystem $A_1^2$ given by $A_1\hookrightarrow A_1^2$ by $x\mapsto (x,x)$. From \cite[Table 10.1]{LS04} we have $V_{27}\downarrow F_4=V_{26}+k$. Now from \cite[5.1]{SteF4} we have $V_{26}\downarrow A_1^2=L(1,1)+k^6+L(1,0)^4+L(0,1)^4$, so $V_{27}\downarrow H=L(1)\otimes L(1)+L(1)^8+k^7=T(2)+L(1)^8+k^7$. In \cite{SteF4} it is shown that $H$ is in a long $A_1$-parabolic of $F_4$, hence $H$ is in an $A_1$-parabolic of $E_6$ (so that $L'$ of type $A_1$). But $V_{27}\downarrow L'=L(1)^9+k^9$ and so $H$ is not $GL(V_{27})$-conjugate to (a subgroup of) $L'$, let alone $E_6$-conjugate. Now $V_{27}\downarrow L'\cong {V_{27}}^*\downarrow L'$ and $V_{27}\downarrow H\cong {V_{27}}^*\downarrow H$. Since the $E_7$-module $V_{56}$ has $V_{56}\downarrow E_6=V_{27}\oplus {V_{27}}^*+k^2$ we see $H$ is also non-$E_7$-cr.

To show it is also non-$E_8$-cr, note that $L(E_8)\downarrow E_7=L(E_7)+L(T_1)+L(Q)^2$ where $Q$ is the unipotent radical of an $E_7$-parabolic of $E_8$, with $L(Q)\downarrow E_7=V_{56}+k$. Thus $L(E_8)\downarrow H$ contains at least two submodules isomorphic to $T(2)$ (contained in the two $V_{56}$s). On the other hand $L(E_8)\downarrow L'=L(A_1T_1)+k^6+M^2$ where $M$ is the restriction to $L'$ of the Lie algebra of the unipotent radical of an $A_1$-parabolic. Using \ref{abs}, $M$ has composition factors with high weights $1$ or $0$, which must be semisimple since $\Ext^1_{A_1}(L(1),L(1))=\Ext^1_{A_1}(L(1),L(0))=0$. In particular, while the direct summand $L(A_1T_1)$ is an indecomposable module $T(2)$ for $L'$, it is the only one in $L(E_8)$; for $H$ there are at least two such (in $L(Q)$). Thus $H$ is also non-$E_8$-cr.

\underline{$(G,X,p)=(E_6,A_2,3)$}

Let $\tau$ denote a graph automorphism of $G$ with induced action on the Dynkin diagram for $G$. If $G_\tau$ denotes the fixed points of $\tau$ in $G$, we have $G_\tau\cong F_4$ such that the root groups corresponding to simple short roots are contained in the subsystem (of type $A_2A_2$) determined by the nodes in the Dynkin diagram of $G$ on which $\tau$ acts non trivially. Thus $H$ is contained in $A_2\tilde A_2\leq F_4$ by $x\mapsto (x,x)$. It is shown in \cite[4.4.1, 4.4.2]{SteF4} that this subgroup is in a $B_3$-parabolic of $F_4$ with $V_7\downarrow \bar H=L_{A_2}(11)$.

In \cite[5.1]{SteF4} the restrictions of the $F_4$-module $V_{26}=V(0001)\cong 0001/0000$ to $H$ and $\bar H$ is calculated. Using this together with $V_{27}\downarrow F_4=T(0001)=0000/0001/0000$ we see that $V_{27}\downarrow\bar H$ cannot be the same as $V_{27}\downarrow H$: the former is an extension by the trivial module of $V_{26}\downarrow \bar H=11^3+00^5$ where the resulting module is self-dual, so must be $11^3+00^6$ whereas the latter is forced to be $T(11)^3$. By a similar argument as before, we also get that this subgroup is non-$E_7$-cr and non-$E_8$-cr.

We give an example of a subgroup not arising from a non-$F_4$-cr subgroup (these being found in \cite{SteF4}): 

\underline{$(G,X,p)=(E_6,A_1,5)$}. 

The module $T(8)=L(0)/L(8)/L(0)$ is a direct summand of the $25$-dimensional module $L(4)\otimes L(4)=T(4)\otimes T(4)$ by \ref{tilting}. The two tensor factors here admit orthogonal forms, so the tensor product does too. Hence we get a subgroup of type $SL_2$ in $GL_{25}$ which is actually contained in $SO_{25}$. Indeed as the $10$-dimensional direct factor $T(8)$ is the unique such, the duality must preserve this factor. Hence we get an $A_1\leq SO_{10}\times SO_{15}$ and so projecting to the first orthogonal group, we get $H\leq SO_{10}$ with $V_{10}|H=T(8)$. 

Now, by \ref{girclassical} we have that this subgroup is in a parabolic of $SO_{10}$. Considering dimensions of composition factors of Levi subgroups of $D_5$ acting on the natural module shows that $H$ must in fact be in a $D_4$-parabolic of $D_5$ with $\bar H$ being $D_4$-irreducible and $V_8\downarrow \bar H=L(8)$. By e.g. \cite[5.1]{SteF4} we can calculate $V_{27}\downarrow D_4=L(\omega_1)+L(\omega_3)+L(\omega_4)+k^3$. We wish to restrict this further to get $V_{27}|\bar H$ and $V_{27}|H$. Note that since $L(8)\cong L(3)\otimes L(1)^{[1]}$, we have $\bar H\leq Sp_4\times Sp_2\leq D_4$. Let $\bar H'$ (resp. $\bar H''$) denote the projection of the $\bar H$ in the first (resp. second) factor. 
Taking a graph automorphism, we can consider $SL_4$ as type $D_3$ corresponding to nodes $2$, $3$ and $4$ of the Dynkin diagram. Then we have $L_{D_4}(\omega_1)| SL_4=L(010)+k^2$, thus $L_{D_4}(\omega_1)|\bar H'=\bigwedge^2(L(3))+k^2=L(4)+k^3$, with $L_{D_4}(\omega_1)|\bar H=L(4)+L(1)^{[1]}+k$ or $L_{D_4}(\omega_1)|\bar H=L(4)+L(2)^{[1]}$. As $\bar H$ is $D_4$-ir, it must be the latter, since $L(1)^{[1]}$ carries a symplectic form. Also from \cite[2.7]{LS96} one sees that $L_{D_4}(\omega_3)\downarrow SL_4\cong L_{D_4}(\omega_4)\downarrow SL_4=L(100)+L(001)$ and so $L_{D_4}(\omega_3)\cong L_{D_4}(\omega_4)| \bar H'=L(3)^2$. Thus $L_{D_4}(\omega_3)\cong L_{D_4}(\omega_4)|\bar H=L(3)\otimes L(1)^{[1]}=L(8)$.

Finally we conclude that $V_{27}\downarrow \bar H=L(8)^2+L(4)+L(2)^{[1]}+0^3$. In particular, $\bar H$ acts semisimply. On the other hand $V_{27}\downarrow D_5=L(\omega_1)+L(\omega_5)+k$ where $L(\omega_5)$. But $H$ does not act semisimply on $V_{10}$. So $\bar H$ is not $GL(V_{27})$-conjugate to $H$, so neither is it $E_6$-conjugate to $H$.

The remaining cases where $X=A_1$ are similar.

Let us now vouch for the existence of the subgroup asserted in case 

\underline{$(G,X,p)=(E_8,C_3,3)$.} 

First observe that since the natural module $L(100)$ for $Sp_6$ admits a symplectic form, the tensor square $M=L(100)\otimes L(100)$ admits an orthogonal form, with composition factors $L(200)|L(010)|L(000)^2$. Since $L(100)$ is a tilting module, so is $M$; and since $L(200)=V(200)=T(200)$, while \cite{Lub01} gives $V(010)=L(010)/L(000)$ we must have $M\cong L(200)+T(010)$. Duality preserves these factors, so the $15$-dimensional module $T(010)$ is orthogonal for $Sp_6$. Thus we have a subgroup $Sp_6\leq SO_{15}\leq SO_{16}$ obviously in a $D_7$-parabolic of this $D_8$.

\underline{$H=E(C_4,E_7,2)$} 

is discussed in \cite[2.7, Proof]{LST96}; there it is shown to be in an $E_6$-parabolic and not conjugate to its image $\bar H\cong C_4\leq F_4\leq E_6=L'$. We need to show that this subgroup is also non-$E_8$-cr. For this, restriction of $L(E_8)$ to an $E_6$ Levi gives $L(E_8)|E_6\cong L(E_6T_2)+L(Q)+L(Q^-)$, with $L(Q)$ having composition factors $k$, $L(\omega_1)$ or $L(\omega_6)$ by \ref{abs}. We have $L(\omega_6)|\bar H=L(\omega_1)|\bar H=L(0100)+k$, and $L(E_6T_2)\cong L(E_6)+k^2$ has dimension $80$. On the other hand, $L(E_8)|E_7=L(E_7T_1)+L(R)+L(R^-)$ for $R$ the unipotent radical of an $E_7$-parabolic. By \ref{abs} $L(R)|E_7=V_{56}+k$. But $V_{56}|A_7=L(\lambda_2)+L(\lambda_6)$ from \cite[2.?]{LS96}. Thus $V_{56}|H=\bigwedge^2(L(1000))+(\bigwedge^2(L(1000)))^*=T(0100)^2$.\footnote{One way to see this is to note that $T=L(1000)\otimes L(1000)$ is a tilting module, whose character can be decomposed to yield composition factors $L(2000)|L(0100)^2|L(0000)^4$. Now, one can use  Doty's Weyl group package for GAP to see that $V_{C_4}(2000)$ is uniserial with successive factors $L(2000)|L(0000)|L(0100)|L(0000)$ and $V_{C_4}(0100)$ is uniserial with successive factors $L(0100)|L(0000)$. Thus $T(2000)$ is uniserial with successive factors $L(0000)|L(0100)|L(0000)|L(2000)|L(0000)|L(0100)|L(0000)$ (it is clear that it has both a Weyl- and dual Weyl-filtration). So $T=T(2000)$ and indecomposable. But $\bigwedge^2(1000)$ is a submodule of $T$; dimension considerations imply that it consists of the last three factors. But $T(0100)=L(0000)|L(0100)|L(0000)$ so the claim follows.} In particular there are $4$ direct factors in $L(E_8)|H$ which are isomorphic to the $28$-dimensional module $T(0100)$. However we found above that there are none in the submodule $(L(Q)+L(Q^-))|\bar H$ of $L(E_8)|\bar H$, so if $H$ were conjugate to $\bar H$, one would have to find these $4$ direct factors $T(0100)$ inside $L(E_6T_1)$; but the dimension of the latter is $79<4\times 28=112$.

There is one further case where we could not give a nice embedding as we have done above. Let 

\underline{$H=(E_7,G_2,7)$.}

We first indicate how to see the existence of this subgroup then show that it cannot have any proper reductive overgroup. By \cite{LS04}, when $p=7$, $F_4$ has a maximal subgroup of type $G_2$. Set $\bar H$ to be this subgroup and regard $\bar H$ as subgroup of a Levi subgroup of an $E_6$-parabolic; note that $\bar H$ is $E_6$-irreducible. By \ref{excepmax} one has $V_{27}|\bar H=L(20)+k$. Now, using \cite{Lub01}, one has, when $p=7$ that $V(20)$ is uniserial with composition factors $20|00$. Thus $H^1(\bar H, L(20))=H^0(\bar H, H^0(20)/L(20))=k$. Now $Q|L'\cong V_{27}$ or ${V_{27}}^*$ so one has $H^1(\bar H,Q)=k$. Now by \cite[3.2.15]{SteF4} it follows that there is a non-$G$-cr subgroup $H$, which is a complement to $Q$ in $\bar HQ$.

Suppose $H$ had a proper reductive overgroup in $G$. Then by \ref{excepmax} it would have to lie in a subsystem subgroup of type $A_7$. Also it cannot lie in any parabolic subgroup of $A_7$ since then $\bar H$ would not be $E_6$-irreducible. Checking \cite{Lub01} one sees that there are no irreducible $8$-dimensional representations of $H\cong G_2$. This is a contradiction. Thus $H$ has no proper reductive overgroup in $G$ as required.

The remaining cases are all similar and easier. This completes the proof of Theorem 1.


%
%
\subsection*{Acknowledgements} We would like to thank Martin Liebeck for helpful comments and corrections on a previous version of the paper.
{\footnotesize
\bibliographystyle{amsalpha}
\bibliography{/Users/dis20/Documents/Maths/Work/stewart}}

\end{document}